\DeclareSymbolFont{cyrletters}{OT2}{wncyr}{m}{n}
\DeclareMathSymbol{\Sha}{\mathalpha}{cyrletters}{"58}
\newcommand{\ba}{\begin{align*}}
\newcommand{\ea}{\end{align*}}
\newcommand{\A}{\ensuremath{{\mathbb{A}}}}
\newcommand{\C}{\ensuremath{{\mathbb{C}}}}
\newcommand{\Z}{\ensuremath{{\mathbb{Z}}}\xspace}
\renewcommand{\P}{\ensuremath{{\mathbb{P}}}}
\newcommand{\Q}{\ensuremath{{\mathbb{Q}}}}
\newcommand{\R}{\ensuremath{{\mathbb{R}}}}
\newcommand{\F}{\ensuremath{{\mathbb{F}}}}
\newcommand{\ra}{\rightarrow}
\newcommand\Conf{\operatorname{Conf}}
\newcommand\Aut{\operatorname{Aut}}
\newcommand\im{\operatorname{im}}
\newcommand\Gal{\operatorname{Gal}}
\newcommand\Nm{\operatorname{Nm}}
\newcommand\Prob{\operatorname{Prob}}
\newcommand\Sur{\operatorname{Sur}}
\newcommand\Tr{\operatorname{Tr}}
\newcommand\tensor{\otimes}
\newcommand\isom{\simeq}
\newcommand\sub{\subset}
\newcommand\tesnor{\otimes}
\newcommand\Disc{\operatorname{Disc}}
\newcommand\PGL{\operatorname{PGL}}
\newcommand\Spec{\operatorname{Spec}}
\newcommand\Jac{\operatorname{Jac}}
\newcommand\Frob{\operatorname{Frob}}
\renewcommand\O{\mathcal{O}}
\newcommand\Pic{\operatorname{Pic}}
\newcommand\bq{\begin{equation}}
\newcommand\eq{\end{equation}}
\newtheorem{proposition}{Proposition}[section]
\newtheorem{theorem}[proposition]{Theorem}
\newtheorem{corollary}[proposition]{Corollary}
\newtheorem{lemma}[proposition]{Lemma}
\newtheorem{conjecture}[proposition]{Conjecture}
\theoremstyle{remark}
\newtheorem{remark}[proposition]{Remark}
\newcommand\Cl{\operatorname{Cl}}
\renewcommand\A{\mathcal{A}}
\newcommand\Ad{\mathcal{A}_\bullet}
\newcommand{\CCHur}{\mathsf{CHur}}
\newcommand{\CHur}{\operatorname{CHur}}
\renewcommand{\v}{\infty}
\newcommand\GG{H}
\newcommand{\SC}{C}
\title{Cohen-Lenstra heuristics and local conditions}
\author{Melanie Matchett Wood}
\address{Department of Mathematics\\
University of Wisconsin-Madison \\ 480 Lincoln Drive \\
Madison, WI 53705 USA\\
and
American Institute of Mathematics\\600 East Brokaw Road\\
San Jose, CA 95112 USA}  
\email{mmwood@math.wisc.edu}
\begin{document}

\begin{abstract}
We prove function field theorems supporting the Cohen-Lenstra heuristics for real quadratic fields, and natural strengthenings of these analogs from the affine class group to the Picard group of the associated curve.  
Our function field theorems also support a conjecture of Bhargava on how local conditions on the quadratic field do not affect the distribution of class groups.
Our results lead us to make further conjectures refining the Cohen-Lenstra heuristics, including on the distribution of certain elements in class groups.  We prove instances of these conjectures in the number field case.  Our function field theorems use a homological stability result of Ellenberg, Venkatesh, and Westerland.
\end{abstract}

\maketitle

\section{Introduction}

For any odd prime $p$, Cohen and Lenstra \cite{CL84} conjectured the distribution of the Sylow $p$-subgroups of class groups of imaginary and real quadratic fields. 
 In particular, if we consider the measure on finite abelian $p$-groups such that 
$$\mu_{CL}(A):=\frac{1}{|\Aut(A)|}\prod_{i\geq 1}(1-p^{-i}),$$
they conjectured that this measure gives the distribution of the Sylow $p$-subgroups of class groups of imaginary quadratic fields.
For real quadratic fields, we make a probability measure $\mu^r_{CL}$ on finite abelian $p$-groups by producing a random group as follows: pick a random group $B$ with respect to $\mu_{CL}$, then pick a (uniform) random element $b\in B$, and then form $B/\langle b \rangle$.  Then $\mu^r_{CL}(A)$ is the probability that this process produces a group isomorphic to $A$.  They then conjectured that $\mu^r_{CL}$ gives the distribution of the Sylow $p$-subgroups of class groups of real quadratic fields.

One of the most compelling reasons to believe this modification for the real quadratic case is by considering the function field analog.
For a finite field $\F_q$ and an extension $K/\F_q(t)$, let $\O_K$ be the integral closure of $\F_q[t]$ in $K$ and $C_K$ be the smooth projective curve over $\F_q$ associated to $K$.
 When $K/\F_q(t)$ is imaginary quadratic (i.e. ramified over $\infty$), then $\Cl(\O_K)\isom\Pic^0(C_K)$.
However, when $K/\F_q(t)$ is real quadratic (i.e. $\infty$ splits into $\infty_1,\infty_2$), then
$\Cl(\O_K)\isom\Pic^0(C_K)/\langle\infty_1-\infty_2 \rangle$. 
The most direct analogy of the Cohen-Lenstra heuristics from the number field case to the function field case asks about the distribution of $\Cl(\O_K)$.  However, in the real quadratic function field case,
 we can ask the richer question of the distribution  of the 
Sylow $p$-subgroups of $\Pic^0(C_K)$ and of $\infty_1-\infty_2\in \Pic^0(C_K).$
In this paper, we prove a theorem about the actual distribution for this richer question.  Before stating these results, we will explain a natural heuristic that will predict what we obtain.

A \emph{pointed abelian $p$-group} is a pair $(A,a)$ where $A$ is an abelian $p$-group and $a\in A$, and two 
pointed abelian $p$-groups $(A,a)$ and $(B,b)$ are isomorphic if there is an isomorphism $A\ra B$ taking $a$ to $b$. 
We put a measure $\mu$ on the set of isomorphism classes of pointed abelian $p$-groups as follows.  
Pick a random group $B$ with respect to $\mu_{CL}$, then pick a (uniform) random element $b\in B$.  Then $\mu(A,a)$
is the probability that this process produces a pointed group isomorphic to $(A,a)$.
Our heuristic is that $\mu$ gives the distribution of $(\Pic^0(C_K),\infty_1-\infty_2)$.

In the progress that has been made on proving exact Cohen-Lenstra predictions for class groups (e.g. \cite{DH71, DW88, Bha05, FK07, Ellenberg2016}), the Cohen-Lenstra measures have often been accessed through their moments---moments that determine a unique distribution.
So, we now turn to the moments of $\mu$ (see \cite[Section 3.3]{Clancy2015} for an explanation of the terminology ``moments'' in this situation). 
We let $\Sur((B,b),(A,a))$ denote the surjective homomorphisms from $B$ to $A$ that take $b$ to $a$. Let $\Ad$ be a set with one representative from each isomorphism class of pointed finite abelian $p$-groups.
\begin{lemma}[Pointed moments]\label{L:mom}
For any finite abelian $p$-group $A$ and $a\in A$, we have
$$
\sum_{(B,b)\in\Ad} |\Sur((B,b),(A,a))|\mu(B,b)=\frac{1}{|A|}.
$$
\end{lemma}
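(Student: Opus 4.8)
The plan is to bypass any explicit formula for $\mu(A,a)$ and instead evaluate the left-hand side directly as an expectation over the two-stage random process defining $\mu$. Concretely, for any function $F$ on isomorphism classes of pointed finite abelian $p$-groups, the construction of $\mu$ gives
$$
\sum_{(B,b)\in\Ad} F(B,b)\,\mu(B,b) = \sum_{B} \mu_{CL}(B)\,\frac{1}{|B|}\sum_{b\in B} F(B,b),
$$
where the outer sum on the right runs over a set of representatives for the isomorphism classes of finite abelian $p$-groups and the inner sum runs over all elements of the chosen representative. I would apply this with $F(B,b)=|\Sur((B,b),(A,a))|$, which depends only on the isomorphism class of $(B,b)$ and so is a legitimate test function.

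The next step is the key combinatorial simplification of the inner sum. Interchanging the order of summation,
$$
\sum_{b\in B}|\Sur((B,b),(A,a))| = \sum_{\phi\in\Sur(B,A)} \#\{b\in B : \phi(b)=a\}.
$$
For each surjection $\phi\colon B\to A$ the fibre $\phi^{-1}(a)$ is a coset of $\ker\phi$ and hence has exactly $|\ker\phi|=|B|/|A|$ elements; this is the one place where surjectivity of $\phi$ is used. Thus the inner sum equals $|\Sur(B,A)|\cdot|B|/|A|$, and the factor $|B|$ cancels against the $1/|B|$ weight coming from the uniform choice of $b$.

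Combining these reduces the pointed moment to the ordinary (unpointed) Cohen--Lenstra moment:
$$
\sum_{(B,b)\in\Ad} |\Sur((B,b),(A,a))|\,\mu(B,b) = \frac{1}{|A|}\sum_{B}\mu_{CL}(B)\,|\Sur(B,A)|.
$$
It then remains to invoke the standard fact that every surjection-moment of $\mu_{CL}$ equals $1$, that is, $\sum_{B}\mu_{CL}(B)\,|\Sur(B,A)|=1$ for every finite abelian $p$-group $A$; this is equivalent to the classical identity $\sum_{B} |\Sur(B,A)|/|\Aut(B)| = \prod_{i\geq 1}(1-p^{-i})^{-1}$ together with the mass formula $\sum_{B} 1/|\Aut(B)| = \prod_{i\geq 1}(1-p^{-i})^{-1}$. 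Substituting gives exactly $1/|A|$.

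I do not expect a genuine obstacle here, since once the first step is set up correctly everything is forced. The only point demanding care is the bookkeeping in that first step: verifying that aggregating the test function over elements $b$ of representatives $B$, weighted by $\mu_{CL}(B)/|B|$, faithfully reproduces the measure $\mu$ on isomorphism classes of pointed groups. Provided one is comfortable treating the surjection count as a function of the pointed isomorphism class (and noting it is invariant under pointed isomorphism), the remaining computation is purely formal.
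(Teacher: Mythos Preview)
Your proposal is correct and follows essentially the same approach as the paper's own proof: unfold the definition of $\mu$ as $\mu^0(B)/|B|$ summed over $b\in B$, swap the sum over $b$ with the sum over surjections to count fiber sizes $|\ker\phi|=|B|/|A|$, cancel the $|B|$, and invoke the unpointed moment $\sum_B\mu^0(B)\,|\Sur(B,A)|=1$. The paper's argument is line-for-line the same computation, citing its Proposition~\ref{P:mom} for the final step.
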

We call this average $\sum_{(B,b)\in\Ad} |\Sur((B,b),(A,a))|\mu(B,b)$ the \emph{ $(A,a)$ moment} of $\mu$.
In fact, these moments determine the measure $\mu$.
\begin{lemma}[Pointed moments determine distribution]\label{L:pmomdet}
 If $\nu$ is a measure on isomorphism classes finite abelian, pointed $p$-groups such that for all $(A,a)\in\Ad$ we have
$$
\sum_{(B,b)\in\Ad} |\Sur((B,b),(A,a))|\nu(B,b)=\frac{1}{|A|},
$$
then $\nu=\mu$.
\end{lemma}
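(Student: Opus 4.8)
The plan is to show that the surjection-moment map is injective on nonnegative measures with these (finite) moments; since Lemma~\ref{L:mom} exhibits $\mu$ as a measure realizing the prescribed values $1/|A|$, any $\nu$ with the same moments must then coincide with $\mu$. Concretely, I would prove an explicit inversion formula expressing each value $\nu(A,a)$ as an absolutely convergent linear combination of the moments $M_\nu(A',a') := \sum_{(B,b)\in\Ad}|\Sur((B,b),(A',a'))|\,\nu(B,b)$. The coefficients of this formula depend only on the pointed groups involved and not on $\nu$, so once convergence is in hand and $M_\nu\equiv M_\mu$ by hypothesis, the formula forces $\nu(A,a)=\mu(A,a)$ for every $(A,a)$.

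First I would record the triangular structure of the moment system. If $(B,b)$ admits a surjection onto $(A,a)$ then $|A|\le|B|$, with equality only when $B\cong A$, and among pointed groups of equal order the only nonzero contributions $|\Sur((A,a'),(A,a))|$ come from $a'$ in the same $\Aut(A)$-orbit as $a$, in which case the count equals $|\Stab_{\Aut(A)}(a)|$. Thus, ordering pointed groups by the order of the underlying group, the moment operator is block upper-triangular with invertible diagonal blocks indexed by $\Aut(A)$-orbits of points. It is convenient to pass between surjection moments and homomorphism moments: a homomorphism $(B,b)\to(A,a)$ factors through its image, giving the finite relation
\[
|\Hom((B,b),(A,a))| = \sum_{\substack{A'\le A\\ a\in A'}} |\Sur((B,b),(A',a))|,
\]
which is invertible by M\"obius inversion over the finite subgroup lattice of $A$. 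Hence the surjection moments and homomorphism moments of $\nu$ determine one another, and I may use whichever is more convenient.

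The heart of the proof is to invert the triangular system globally. Formally solving it expresses $\nu(A,a)$ as an infinite alternating sum $\sum_{(B,b)} c((A,a),(B,b))\,M_\nu(B,b)$ over pointed groups $(B,b)$ with $|B|\ge|A|$, the coefficients $c$ being built from inverse automorphism-stabilizer factors and alternating surjection (equivalently, M\"obius) counts. The main obstacle is convergence: one must show this sum is absolutely convergent so that the interchange of summations is justified and the formula genuinely recovers $\nu$. Here the prescribed moments are extremely small, $M_\nu(B,b)=1/|B|$, and I would bound the sum by playing the two competing growth rates---the number of pointed abelian $p$-groups of order $p^m$ (at most $p^m$ times the number of partitions of $m$) and the size of the surjection and automorphism counts entering $c$---against the decay $1/|B|$. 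Producing a clean majorant of this type is the one genuinely delicate estimate; everything else is formal linear algebra over the poset of pointed groups.

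As a consistency check locating the unique solution, I would note two structural features the inversion must reproduce. Summing the moment identity over all $a\in A$ collapses pointed surjection counts to unpointed ones, $\sum_{a\in A}|\Sur((B,b),(A,a))| = |\Sur(B,A)|$, so the marginal of $\nu$ on underlying groups has all unpointed moments equal to $1$ and is therefore $\mu_{CL}$ by classical unpointed moment determinacy; moreover the value $1/|A|$ is independent of $a$, which is precisely the symmetry forcing the conditional law of the distinguished element to be uniform. Together these identify the unique measure with the given moments as $\mu$, consistent with the inversion. Alternatively, since all moments are bounded by $1$, one could deduce uniqueness by invoking a general moment-determinacy theorem for measures on finite abelian $p$-groups enriched by a distinguished element, the decay $1/|A|$ being comfortably within its hypotheses.
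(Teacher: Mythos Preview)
Your approach---explicit M\"obius inversion on the triangular surjection-moment system and then a convergence bound---could in principle be completed, but it is not the paper's argument, and the step you flag as ``the one genuinely delicate estimate'' is exactly the step the paper sidesteps. The paper does not attempt to produce an inversion formula at all. Instead it invokes a short abstract lemma (Lemma~\ref{L:la}): if $(a_{i,j})$ is a nonnegative matrix with $a_{i,i}=1$ and all row sums bounded by some $\alpha<2$, then the equations $\sum_j a_{i,j}x_j=1$ have at most one nonnegative solution. Enumerating pointed groups and setting
\[
a_{i,j}=\frac{|A_i|\,|\Sur((A_j,a_j),(A_i,a_i))|}{|A_j|\,|\Aut(A_j,a_j)|},\qquad x_j=|A_j|\,|\Aut(A_j,a_j)|\,\nu(A_j,a_j),
\]
the diagonal entries are $1$, and the entire convergence/majorant problem reduces to the single closed-form identity
\[
\sum_j a_{i,j}=\prod_{k\ge 1}(1-p^{-k})^{-1}<2 \quad (p\ge 3),
\]
which follows directly from Lemma~\ref{L:mom} and the formula $\mu(B,b)=\bigl(|B|\,|\Aut(B,b)|\bigr)^{-1}\prod_{k\ge 1}(1-p^{-k})$. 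So where you anticipate a delicate growth-versus-decay estimate over all pointed groups, the paper just checks a product is below $2$.

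Two further remarks on your write-up. First, your ``consistency check'' paragraph is not a proof on its own: knowing that the marginal on underlying groups is $\mu_{CL}$ and that the moments are constant in $a$ does not by itself force the conditional law of $b$ given $B$ to be uniform---that still requires solving the pointed system. Second, your final sentence (``invoke a general moment-determinacy theorem'') is essentially the paper's route, but you would need to name the result and verify its hypothesis; the paper's Lemma~\ref{L:la} is precisely such a result, and the hypothesis is the row-sum bound above.
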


We prove the following, which gives evidence towards the Cohen-Lenstra
heuristics over function fields as well as the refined heuristic above.
\begin{theorem}[Pointed $\Pic^0(C_K)$ distribution]\label{T:RSC}
Let $A$ be a finite odd order abelian group, let $a\in A$, and let
 $$
\delta^+_q:=\limsup_{m \ra\infty}  \frac{  \sum_{K/\F_q(t) \textrm{ real quad,} \Nm \Disc(K)=q^{2m}} |\Sur((\Pic^0(C_K),\infty_1-\infty_2),(A,a))|}{\sum_{K/\F_q(t) \textrm{ real quad,} \Nm \Disc(K)=q^{2m}}1}
$$
and $\delta^-_q$ the corresponding $\liminf$.
Then as $q\ra\infty$ among odd prime powers such that $(q(q-1),|A|)=1$, we have
$$
\delta^+_q,\delta^-_q\ra \frac{1}{|A|}.
$$
\end{theorem}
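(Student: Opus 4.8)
The plan is to follow the template of Ellenberg, Venkatesh, and Westerland, translating the pointed moment into a count of points on a Hurwitz space and then extracting the asymptotics from homological stability. First I would use geometric class field theory to interpret each summand: a surjection $\Pic^0(C_K)\to A$ corresponds to a connected unramified abelian $A$-cover of $C_K$, and since $C_K\to\P^1$ is the hyperelliptic double cover branched over the $\Disc(K)$ locus, such an $A$-cover together with the hyperelliptic involution assembles into a connected $G$-cover of $\P^1$, where $G=A\rtimes\langle\sigma\rangle$ is the generalized dihedral group with $\sigma$ acting on $A$ by inversion. Because $K/\F_q(t)$ is real quadratic and the $A$-cover is unramified, this $G$-cover is branched only over the (even number of) finite branch points, each with inertia a reflection in the nontrivial coset $A\sigma$, and is split over $\infty$. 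The crucial point is that the pointing condition $\infty_1-\infty_2\mapsto a$ is exactly a condition on the boundary monodromy: the product of the reflection monodromies, which equals the monodromy at $\infty$, lies in $A$ and records the image of $\infty_1-\infty_2$ under the surjection (up to the labeling of $\infty_1,\infty_2$, which negates it and is harmless since $|A|$ is odd).

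Next I would realize these covers as $\F_q$-points of a Hurwitz space. With $n:=2m$ branch points, the surjections onto $(A,a)$ counted in the numerator are the $\F_q$-points of the union of connected components of the reflection-monodromy Hurwitz space $\CHur$ for which the monodromies generate $G$ and the boundary monodromy equals $a$; the denominator is the number of squarefree degree-$n$ discriminants, i.e. the $\F_q$-points of the configuration space $\Conf_n(\mathbb{A}^1)$ parametrizing the branch loci (equivalently the Hurwitz space for $G=\langle\sigma\rangle=\Z/2$). By Grothendieck--Lefschetz, each count is $\sum_i(-1)^i\Tr(\Frob_q\mid H^i_c)$ of the relevant space, so both numerator and denominator have leading term $q^n$ coming from $H^0$ (the components), with higher cohomology entering at lower order in $q$.

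The asymptotics then come from the stable homology. I would invoke the homological stability theorem of Ellenberg--Venkatesh--Westerland: for fixed cohomological degree the groups $H^i(\CHur;\Q_\ell)$ stabilize as $n\to\infty$, and the stable $H^0$ is computed by the theory of lifting invariants (Conway--Parker, Fried--V\"olklein). The input I need is that, once $n$ is large, the generating $G$-covers split into one component for each valid pair of lifting invariant and boundary value $g_\infty\in A$, so that the number of components is equidistributed over the $|A|$ possible values of $g_\infty$. The hypothesis $(q(q-1),|A|)=1$ ensures the covers are tame and, more importantly, that $\Frob_q$ fixes the relevant roots of unity and acts trivially on $\pi_0$, so that each component contributes $q^n(1+o(1))$ with trivial Frobenius sign. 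Dividing the numerator (components with $g_\infty=a$) by the denominator (the single hyperelliptic family) then yields the predicted pointed moment $\tfrac{1}{|A|}$ of Lemma \ref{L:mom}.

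The main obstacle is controlling the double limit. For fixed $q$ the Lefschetz sum runs over all cohomological degrees up to $2n$, while stability only describes $H^i$ in a range linear in $n$; the unstable range contributes error terms that I cannot make negligible for fixed $q$ as $m\to\infty$, which is precisely why the statement is phrased with $\limsup$ and $\liminf$. The resolution is to use the Ellenberg--Venkatesh--Westerland bounds on the growth of the Betti numbers of $\CHur$: these grow at most exponentially in $n$, so after dividing by $q^n$ the unstable contribution is $O(q^{-c})$ uniformly in $m$, and letting $q\to\infty$ squeezes both $\delta_q^+$ and $\delta_q^-$ to the stable ratio $\tfrac{1}{|A|}$. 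The most delicate point within this is the exact stable component count with prescribed boundary monodromy, i.e. verifying the equidistribution of lifting invariants over $g_\infty$, since it is this combinatorial-homological input that pins the limit to $\tfrac{1}{|A|}$ rather than merely to some constant.
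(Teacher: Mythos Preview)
Your overall strategy matches the paper's: translate to $G$-covers with $G=A\rtimes_{-1}S_2$, pass to a Hurwitz scheme, apply Grothendieck--Lefschetz, control the error via the EVW exponential Betti bounds, and count Frobenius-fixed components in the stable range. But there is a genuine gap in how you encode the pointing condition.

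You write that $\infty_1-\infty_2\mapsto a$ is a condition on the \emph{boundary monodromy}, i.e.\ on the product $g_1\cdots g_n$ of the reflection monodromies, and that restricting to components with boundary value $a$ isolates the desired surjections. This is not correct. The covers in question are split (hence unramified) at $\infty$, so the geometric monodromy at $\infty$---which is exactly that product of finite branch monodromies---is \emph{trivial} for every cover you want to count, regardless of $a$. If you instead allowed boundary value $a\ne 0$, you would be parametrizing covers whose $A$-part is ramified over $\infty$, which do not correspond to surjections from $\Pic^0(C_K)$ at all. The element $\infty_1-\infty_2\in\Pic^0(C_K)$ is not an inertia invariant; under the class-field-theory identification it is $2\Frob_{\infty_1}$ (Proposition~\ref{P:countsur}), an \emph{arithmetic} Frobenius datum, and the geometric components of the Hurwitz scheme over $\bar{\F}_q$ do not see it.

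The paper handles this differently. It uses the $H$-action on $\CCHur^{c,1}_{H,n}$ together with the bijection of Theorem~\ref{T:Chur}\eqref{i:bij}: marked extensions with $\pi(F_\Delta)=h:=a/2$ correspond to points fixed by $h^{-1}\Frob$, not by $\Frob$. One descends the base-changed Hurwitz scheme to a twist $Y/\F_q$ whose $\F_q$-points are exactly these $h^{-1}\Frob$-fixed points, and runs Lefschetz on $Y$. The component count then comes from Theorem~\ref{T:Chur}\eqref{i:comp}: the $h^{-1}\Frob$-fixed components are indexed by fixed points of the discrete $q^{-1}$-action on the universal marked central extension $\widetilde{H}$, and since $H_2(H,c)$ is a quotient of $H_2(A,\Z)$ this count is $\#H_2(H,c)[q-1]=1$ under the hypothesis $(q-1,|A|)=1$. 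Finally, the factor $1/|A|$ in the answer does not arise from equidistribution of components over boundary values; it comes from the $|A|$-to-$1$ map of Proposition~\ref{P:countsur} between marked $(H,c)$-extensions and pointed surjections. Once you replace the boundary-monodromy restriction with this Frobenius twist, the remainder of your outline---the comparison of \'etale and singular cohomology, Deligne's eigenvalue bounds, the exponential Betti estimate $\dim H^j\le C^j$, and the squeeze as $q\to\infty$---goes through exactly as you describe and matches the paper.
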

The function field analog of our refined heuristic above predicts that for each odd prime power $q$ with $(q(q-1),|A|)=1$ that $\delta_q^\pm=|A|^{-1}$.
In the case $a=0$ we have the usual group-indexed moments of $\Cl(\O_K)$ (Corollary~\ref{C:regmom})
and adding over $a$ we have the usual group-indexed moments of $\Pic^0(C_K)$ (Corollary~\ref{C:Picmom}). 

We prove Theorem~\ref{T:RSC} by first converting the problem of counting pointed surjections to a problem of counting extensions of $\F_q(t)$, which are parametrized by a Hurwitz scheme first defined Romagny and Wewers \cite{Romagny2006}, and further studied by Ellenberg, Venkatesh, and Westerland \cite{Ellenberg2012}.
We use the Grothendieck-Lefschetz trace formula to count $\F_q$ points on these schemes.  This overall approach to the Cohen-Lenstra heuristics over function fields goes back to to unpublished work of J.-K. Yu, and was also built upon by Achter \cite{Achter2006}.

Ellenberg, Venkatesh, and Westerland made a tremendous breakthrough in this approach when they proved a homological stability theorem for the complex versions of these Hurwitz schemes \cite{Ellenberg2016}.
Also, Ellenberg, Venkatesh, and Westerland's work on Hurwitz schemes  relates their components to group theoretic invariants \cite{Ellenberg2012}. Both of these advances will go into the proof of Theorem~\ref{T:RSC}.
Ellenberg, Venkatesh, and Westerland \cite{Ellenberg2016} have proven the non-pointed analog of Theorem~\ref{T:RSC} (i.e., the analog of Corollary~\ref{C:Picmom}) for imaginary quadratic extensions, which was the motivation for their work on Hurwitz schemes.

In \cite{CL84}, Cohen and Lenstra actually give a conjecture for the entire odd part of the class groups of quadratic fields. 
In this paper, for simplicity we  restrict ourselves to Sylow $p$-subgroups when explaining the heuristics, but give our theorems about class groups without this restriction.
Note that all results support the heuristic that $p_i$-parts of class group behavior are independent at finite sets of primes $p_i$.

We prove a similar result (Theorem~\ref{T:in}) for quadratic extensions of $\F_q(t)$ inert over $\infty$ for non-pointed moments.
 Combining these results with those of Ellenberg, Venkatesh, and Westerland, we come to the following conclusion.
  \emph{Among all quadratic $K/\F_q(t)$ we have proven evidence that the Sylow $p$-subgroups of $\Pic^0(C_K)$
are distributed by $\mu_{CL}$, and further that this distribution is unaffected by restricting to quadratic extensions with a certain behavior at $\infty$.}  
As long as we use $\Pic^0(C_K)$, as opposed to $\Cl(\O_K)$, we see there is no difference between the real and imaginary distributions.
Indeed, this gives evidence for the following conjecture, which was made by Bhargava for number fields at the 2011 AIM Workshop on the Cohen-Lenstra heuristics and the subject of many days of discussion at the workshop.
(For a number field $K$, let $\O_K$ denote its ring of integers. For a finite abelian group $A$, let $A_{odd}$ denote its odd part.)
\begin{conjecture}[Local conditions on $K$, c.f. \cite{Bha11} ]\label{C:indoffin}
In any of the following families, ordered by discriminant, with their class groups as given:
\begin{enumerate}
 \item $K/\Q$ imaginary quadratic, $\Cl(\O_K)_{odd}$
 \item $K/\Q$ real quadratic, $\Cl(\O_K)_{odd}$
\item $K/\F_q(t)$ quadratic, $\Pic^0(C_K)_{odd}$
\end{enumerate}
the distribution of the respective class groups is not changed upon restricting only  those $K$ that have specific completions at a finite set of nonarchimedian places (of $\Q$ or $\F_q(t)$).  In particular, in $(3)$ the Sylow $p$-subgroups are distributed by $\mu_{CL}$
when $p\nmid q(q-1)$.
\end{conjecture}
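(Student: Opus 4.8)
Since the final statement is a \emph{conjecture} rather than a theorem, the plan is to describe how one proves the portion that is within reach---family $(3)$ in the large-$q$ regime, which is exactly the local-condition independence supported by the paper's results---and then to isolate precisely where the obstruction to the full conjecture lies. The first step is to reduce each local-condition assertion to a statement about moments. By Lemma~\ref{L:pmomdet} (and its unpointed specializations, which underlie Corollaries~\ref{C:regmom} and~\ref{C:Picmom}), a distribution on finite abelian $p$-groups is determined by the averages $\sum |\Sur(B,A)|\,\nu(B)$. Hence, to show that restricting to $K$ with prescribed completions at a finite set $S$ of nonarchimedean places does not alter the limiting distribution of $\Pic^0(C_K)_{odd}$, it suffices to show that the $A$-moment, recomputed over the sub-family cut out by the local conditions at $S$, still converges to the value $|A|^{-1}$ furnished by Theorem~\ref{T:RSC} and its inert analog Theorem~\ref{T:in}.

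The second step is to realize the constrained sub-family inside a Hurwitz space. As in the proof of Theorem~\ref{T:RSC}, a quadratic $K/\F_q(t)$ together with a surjection $\Pic^0(C_K)\twoheadrightarrow A$ corresponds to a point of a Hurwitz scheme parametrizing branched covers whose monodromy lies in a prescribed union of conjugacy classes. Fixing the completion of $K$ at a place $v\in S$ amounts to fixing the local behavior of the cover above $v$: whether $v$ is a branch point, and if so its ramification type, or otherwise its splitting type. This is a position-and-type condition on the branch divisor that carves out an open-and-closed (or at worst locally closed) union of components, equivalently a base change along the configuration of the fixed places in $S$. One then runs the Grothendieck--Lefschetz count together with the homological stability input of Ellenberg, Venkatesh, and Westerland. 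The crucial observation is that the conditions at the finite set $S$ contribute the \emph{same} multiplicative factor to the numerator (covers weighted by surjections to $A$) and to the denominator (all quadratic $K$), because $S$ is fixed while the number of moving branch points grows; in the stable range the homology governing the leading term is insensitive to the finitely many constrained points. Thus the ratio defining the $A$-moment is unchanged in the limit, which yields family~$(3)$ for large $q$.

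The main obstacle is everything outside this regime, and it is exactly what keeps the statement a conjecture. The argument above controls only the $q\to\infty$ limit; for a fixed $q$, and a fortiori for the number-field families~$(1)$ and~$(2)$, there is at present no analog of the homological-stability theorem, and the relevant moments are themselves known only for very small $A$ (for instance via Davenport--Heilbronn and its refinements for $A\isom\Z/3\Z$). Even within those accessible number-field cases, establishing local-condition independence demands that the secondary and error terms be \emph{uniform} across the imposed completions, so that restricting to the places in $S$ perturbs neither the main term nor its rate of convergence; controlling this uniformity is precisely the delicate point that is not currently available, and is why families~$(1)$ and~$(2)$ remain open.
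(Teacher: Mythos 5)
You are right that this statement is a conjecture: the paper offers no proof of it, only evidence, so the honest comparison is between your sketch and the paper's supporting theorems. Your overall framing does match the paper's machinery (moments via Lemma~\ref{L:pmomdet}, Hurwitz schemes, the Grothendieck--Lefschetz formula, and the homological stability input of \cite{Ellenberg2016}), and your diagnosis of why families (1) and (2) are out of reach is accurate. But your second step contains a genuine gap that makes your claimed conclusion --- family (3) for large $q$ with prescribed completions at an \emph{arbitrary finite set} of nonarchimedean places --- strictly stronger than anything the paper establishes. The paper handles exactly one local condition, at one degree-$1$ place: the behavior at $\infty$ is built into the marked Hurwitz scheme $\CCHur^{c,1}_{H,n}$ from the outset (covers unramified at $\infty$, with the infinity type recorded through the marking and the $h^{-1}\Frob$ twist of the $\F_q$-structure), giving the split case in Theorem~\ref{T:RSC}, the inert case in Theorem~\ref{T:in}, and the ramified case in \cite{Ellenberg2016}; conditions at other degree-$1$ places are then obtained only via the $\PGL_2(\F_q)$ action, which moves a single rational point to $\infty$ but cannot impose simultaneous conditions at several places, nor at places of higher degree. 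Your assertion that fixing completions at a finite set $S$ ``carves out an open-and-closed (or at worst locally closed) union of components'' and contributes ``the same multiplicative factor'' to numerator and denominator is not justified: a splitting or ramification condition at a fixed finite place $v$ is a condition on the Frobenius action on the fiber of the cover over $v$ (and on whether $v$ meets the moving branch divisor), i.e.\ a pointwise condition on the $\F_q$-points of the Hurwitz scheme, not a condition on its geometric components. Counting points subject to such conditions is not a direct application of the trace formula as used in the paper; one would need counts over the sublocus of $\Conf^n$ with prescribed incidence with $S$, together with cohomological control (or a twisted-sheaf variant) for which no stability statement is available in the cited works. This is precisely why the paper claims evidence only ``for a condition at any degree $1$ place.''

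A second, smaller overreach: even where the moments are controlled, your appeal to Lemma~\ref{L:pmomdet} does not yield the distributional statement in the conjecture. The theorems show only that the $\limsup$ and $\liminf$ of the moments converge to the conjectured values as $q\to\infty$, not that the moments equal those values at any fixed $q$; and, as the paper explicitly cautions after Corollary~\ref{C:regmom}, it is not known that the class groups in these families are distributed according to any limiting measure at all, so ``moments determine the measure'' cannot be invoked to conclude that the distribution is unchanged by local conditions. Your sketch is therefore best read as a correct account of the paper's evidence for one condition at a degree-$1$ place, not as a proof of family (3) in any regime.
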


By the action of $\PGL_2(\F_q)$ in Theorem~\ref{T:RSC}, we could replace $\infty$ by any degree $1$ point of $C_K$, and so we have evidence for Conjecture~\ref{C:indoffin}(3), for a condition at any degree $1$ place.

The prediction of Conjecture~\ref{C:indoffin} for the average size of $3$-torsion in the class groups of real or imaginary quadratic fields was proven by Bhargava and Varma  \cite[Corollary 4]{Bhargava2016}.
Evidence for a generalization of Conjecture~\ref{C:indoffin} to cubic extensions is given by Bhargava and Varma in \cite{Bhargava2015d},
in which they prove the average size of $2$-torsion of class groups of cubic fields is not affected by local conditions at non-archimedian places.

In Conjecture~\ref{C:indoffin}, we could lump together the two cases over $\Q$, so that the distribution would be the average of $\mu_{CL}$ and $\mu^r_{CL}$ and make the same statement.  The work of Davenport and Heilbronn \cite{DH71} shows that the distribution is indeed (provably) affected by restricting to only those $K/\Q$ with a specific completion at $\infty$.  It is worth noting that the distribution of class groups of cubic fields is also  provably affected by certain global conditions on the field, such as monogenicity, by work of Bhargava and Shankar \cite{BS10v2}. 

Further, Theorem~\ref{T:RSC} leads us to conjecture the following on the distribution of special elements in  class groups.  For an abelian group $A$, let $A_p$ denote the Sylow $p$-subgroup of $A$.  
\begin{conjecture}[Distribution of elements in $\Cl_K$]\label{C:atsc}
Let $p$ be an odd rational prime and $Q$ be $\Q$ or $\F_q(t)$ with $p\nmid q(q-1)$. For $K/Q$ let $\Cl_K$ be $\Cl(\O_K)_{p}$ or $\Pic^0(C_K)_{p}$ respectively, and use $+$ to denote the group law.
For a finite place $v$ of $Q$, if we consider quadratic extensions $K$ of $Q$ that are split completely at $v$ into $v_1,v_2$, ordered by discriminant, then (informally) $v_1-v_2$ is distributed uniformly in $\Cl_K$.  More precisely,
in the number field case if we restrict to $K$ imaginary, or in the function field case, the isomorphism classes of pairs $(\Cl_K,v_1-v_2)$ are distributed according to $\mu$.
\end{conjecture}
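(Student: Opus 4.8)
The plan is to reduce the distributional assertion to a moment computation and then read those moments through class field theory. By Lemma~\ref{L:pmomdet} it suffices to show that for every pointed group $(A,a)\in\Ad$ the empirical $(A,a)$ moment of the family tends to $1/|A|$; that is,
$$
\lim \frac{\sum_K |\Sur((\Cl_K,v_1-v_2),(A,a))|}{\sum_K 1}=\frac{1}{|A|},
$$
the sum running over the relevant quadratic $K$ (split at $v$ into $v_1,v_2$), ordered by discriminant. The first step is the dictionary: a surjection $\phi\colon\Cl_K\twoheadrightarrow A$ corresponds to an unramified abelian extension $L/K$ with $\Gal(L/K)=A$, and under the Artin map $\phi([v_i])=\Frob_{v_i}$. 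Thus the pointing $\phi(v_1-v_2)=a$ becomes the Frobenius-difference condition $\Frob_{v_1}-\Frob_{v_2}=a$ in $A$, and the $(A,a)$ moment counts, on average, unramified $A$-extensions of $K$ with prescribed splitting of $v$.

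Summing over $a\in A$ collapses the pointing, $\sum_a|\Sur((\Cl_K,v_1-v_2),(A,a))|=|\Sur(\Cl_K,A)|$, whose average is the ordinary group-indexed Cohen-Lenstra moment. Hence the real content is that the Frobenius difference $\Frob_{v_1}-\Frob_{v_2}$ is \emph{equidistributed} over $A$, on average over the family. In the function field case this is exactly Theorem~\ref{T:RSC}: the pointed extensions are parametrized by the Hurwitz schemes of Romagny--Wewers and Ellenberg--Venkatesh--Westerland, the Grothendieck--Lefschetz trace formula converts the count to a cohomological one, and the EVW homological stability theorem together with their component count isolates the main term $1/|A|$ as $q\to\infty$; the $\PGL_2$ remark following Theorem~\ref{T:RSC} extends this from $\infty$ to any degree-one place $v$.

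For the number field case I would attack the provable instances by turning the Frobenius-difference count into a count of auxiliary fields carrying a splitting condition at $v$. Take $A=\Z/3\Z$ and $K$ imaginary quadratic: an unramified cyclic cubic $L/K$ has $\Gal(L/\Q)\cong S_3$ and contains a non-Galois cubic field $F$ with $\disc F=\disc K$, and the family of such $F$ is counted by Davenport--Heilbronn. Because $v$ splits in $K$, its Frobenius lies in $A_3$, so $v$ is either totally split in $F$ (the identity class, forcing $a=0$) or inert in $F$ (a three-cycle, forcing the two nonzero values of $a$, one for each of the two surjections $\{\phi,-\phi\}$ attached to $L$). A short bookkeeping then shows the three values of $a\in\Z/3\Z$ occur in the ratio $1:1:1$ exactly when the conditional split-versus-inert densities of $v$ in $F$ are $1/3$ and $2/3$ --- precisely the assertion that imposing a local condition at $v$ perturbs the cubic-field count only by the expected local factor, which is the theorem of Bhargava and Varma. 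This yields the moment $1/|A|=1/3$ and proves the $A=\Z/3\Z$ instance for imaginary quadratic $K$.

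The main obstacle to the full conjecture is that this route is bounded by the available counting theorems. For general $A$ even the unpointed moment of $\Cl_K$ over $\Q$ is unknown --- the Cohen-Lenstra heuristics for number fields remain open beyond the smallest cases --- so one cannot expect more than instances ($A=\Z/3\Z$, and whatever low moments the geometry-of-numbers methods of Bhargava and collaborators reach). Moreover, even granting the unpointed count, the genuinely new ingredient is the \emph{refined} equidistribution: one must control the Frobenius difference, that is, track the auxiliary splitting data at $v$, rather than merely know the average number of extensions. Isolating that refined local equidistribution --- showing that the pointing spreads the unpointed mass uniformly over $A$ --- is where I expect the difficulty to concentrate.
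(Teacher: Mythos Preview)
The statement you are addressing is a \emph{conjecture}; the paper does not prove it and does not claim to. What you have written is an accurate summary of the paper's strategy for producing \emph{evidence} toward it --- reduce to pointed moments via Lemma~\ref{L:pmomdet}, interpret pointed surjections as unramified $A$-extensions of $K$ with a Frobenius constraint at $v$, then count those extensions by Hurwitz-scheme methods over function fields or by Davenport--Heilbronn and Bhargava--Varma for $A=\Z/3\Z$ over $\Q$. That is exactly what the paper carries out in Sections~\ref{S:rproof} and~\ref{S:CLd}, so on the level of approach you are aligned with the paper.

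There are, however, two places where you overstate what is actually established. First, Theorem~\ref{T:RSC} does \emph{not} settle the function field case of the conjecture: it shows only that the $\limsup$ and $\liminf$ in $m$ both tend to $1/|A|$ as $q\to\infty$ through suitable prime powers. The conjecture asserts that for each fixed admissible $q$ the limit in $m$ exists and equals $1/|A|$, which remains open; so ``this is exactly Theorem~\ref{T:RSC}'' is too strong. Second, even if all pointed moments were known for a fixed $q$ (or over $\Q$), Lemma~\ref{L:pmomdet} would not by itself give the distributional conclusion: that lemma presupposes a measure $\nu$ exists, and the paper explicitly flags that it is not known whether the class groups in these families are ``distributed according to a measure'' (see the discussion following Corollary~\ref{C:regmom}). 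Hence your opening reduction ``by Lemma~\ref{L:pmomdet} it suffices to show the moments are $1/|A|$'' hides a real gap. Your closing paragraph is appropriately candid about the number-field obstruction; the same candor should apply on the function-field side.
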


Theorem~\ref{T:RSC} proves evidence towards Conjecture~\ref{C:atsc} in function field cases.  We also prove the  prediction of Conjecture~\ref{C:atsc} on the average $3$-torsion of the class groups in the number field case, building on the work of Davenport and Heilbronn \cite{DH71} 
 (which proves the original Cohen-Lenstra prediction for the average size of the $3$-torsion of the class group). For example, our result in the imaginary quadratic number field case is as follows (see Theorem~\ref{T:DH2} for the complete result).

\begin{theorem}[Distribution of elements in $\Cl_K$, $\Z/3\Z$ moment]\label{T:iDH2}
 Let $v_1,\dots,v_n$ be finite places of $\Q$, and $F_i$ be \'{e}tale quadratic  $\Q_{v_i}$-algebras, 
with $F_1=\Q_{v_1}^{\oplus 2}$.  
Let $S^+$ be the set of imaginary quadratic extensions $K$ of $\Q$ such that $K\tensor_\Q \Q_{v_i}\isom  F_i$.
Let $v_1$ split into $w_1$ and $w_2$ in $K$ and let $a\in \Z/3\Z$.
We have
$$
\lim_{X\ra\infty} \frac{\sum_{K\in S^+, |\Disc(K)|<X} |\Sur((\Cl(\O_K),w_1-w_2),(\Z/3\Z,a)| }{\sum_{K\in S^+, |\Disc(K)|<X} 1}
=\frac{1}{3}.
$$
\end{theorem}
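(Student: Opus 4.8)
The plan is to convert the pointed moment into a count of cubic fields with a prescribed splitting at $v_1$, and then to run the Davenport--Heilbronn method with local conditions. First I would invoke the class field theory / Davenport--Heilbronn dictionary. A surjection $\phi\colon\Cl(\O_K)\to\Z/3\Z$ corresponds to an everywhere-unramified cyclic cubic extension $M/K$, with $\phi(w)=\Frob_w$ under a fixed identification $\Gal(M/K)\isom\Z/3\Z$. Since $K$ is a genuine imaginary quadratic field, $\Gal(M/\Q)\isom S_3$ and $M$ is the Galois closure of a non-cyclic cubic field $L$ with $\disc(L)=\disc(K)$, the unordered pair $\{\phi,-\phi\}$ determining $L$. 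The key local observation is that $w_1$ and $w_2$ are interchanged by the nontrivial element of $\Gal(K/\Q)$, whose lift to $S_3$ is a transposition and therefore acts on $\Gal(M/K)$ by inversion; hence $\Frob_{w_2}=\Frob_{w_1}^{-1}$, i.e. $\phi(w_2)=-\phi(w_1)$, so that $\phi(w_1-w_2)=2\phi(w_1)$.

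Consequently $\phi(w_1-w_2)=a$ is equivalent to $\phi(w_1)=2a$, which is a condition on $\Frob_{v_1}\in A_3$: it says $v_1$ splits completely in $L$ when $a=0$, and that $v_1$ is inert (a single prime of degree $3$) when $a\neq0$. Writing $N_0(K)$ and $N_1(K)$ for the number of cubic fields $L$ with resolvent $K$, $\disc(L)=\disc(K)$, and $v_1$ split completely, respectively inert, a short book-keeping over the two surjections $\{\phi,-\phi\}$ attached to each $L$ gives $|\Sur((\Cl(\O_K),w_1-w_2),(\Z/3\Z,0))|=2N_0(K)$ and $|\Sur((\Cl(\O_K),w_1-w_2),(\Z/3\Z,a))|=N_1(K)$ for $a\neq0$. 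It therefore suffices to show that, averaging over $K\in S^+$ ordered by discriminant, $2N_0$ and $N_1$ each have mean $\tfrac13$.

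Next I would evaluate these means. By the Davenport--Heilbronn theorem in the form permitting finitely many local conditions, together with its insensitivity to those conditions proved by Bhargava--Varma, the mean of $|\Cl(\O_K)[3]|-1$ over $K\in S^+$ equals the unconditioned imaginary value $1$; since $N_0+N_1=(|\Cl(\O_K)[3]|-1)/2$, the mean of $N_0+N_1$ is $\tfrac12$. It then remains to pin down the ratio $N_0:N_1$, which through Bhargava's parametrization of cubic rings by integral binary cubic forms is governed by the local density at $v_1$. Among binary cubic forms over the residue field whose reduction is separable with split quadratic resolvent, those factoring into three distinct rational factors (the type $\Q_{v_1}^{\oplus 3}$) and those irreducible (the unramified cubic field) occur in the ratio $1:2$ --- equivalently, $\Frob_{v_1}$ equidistributes in $A_3$. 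Thus the mean of $N_0$ is $\tfrac16$ and the mean of $N_1$ is $\tfrac13$, giving mean $2N_0=\tfrac13$ and mean $N_1=\tfrac13$ for every $a\in\Z/3\Z$, which is the assertion.

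The main obstacle is this final density step: one must run the Davenport--Heilbronn count of binary cubic forms with a local condition imposed on the cubic field $L$ \emph{itself} at $v_1$ (not merely on its quadratic resolvent, as in Bhargava--Varma), with an error term $o(X)$ uniform enough to isolate the ratio, and verify that the split-completely and inert unramified types carry local mass in proportion $1:2$. The remaining ingredients --- the class field theory dictionary, the inversion computation giving $\phi(w_1-w_2)=2\phi(w_1)$, and the book-keeping over $\{\phi,-\phi\}$ --- are then routine given the cited results.
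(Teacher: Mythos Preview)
Your approach is correct and essentially identical to the paper's: the same class-field-theory dictionary (Proposition~\ref{P:countsur1}), the same reduction to counting nowhere-overramified non-cyclic cubic fields with a prescribed splitting type at $v_1$, and the same local mass computation ($c(\Q_{v_1}^{\oplus 3})=1/6$ versus $c(K_{v_1})=1/3$, giving your $1:2$ ratio). The one step you flag as the ``main obstacle''---the Davenport--Heilbronn count with local conditions on $L$ itself---is exactly what the paper invokes from Datskovsky--Wright \cite[Theorem~4.1]{DW88}, together with an explicit $\limsup/\liminf$ sieve using the tail bound \cite[Lemma~5.1]{DW88} to pass from finitely many local conditions to the infinitely many ``nowhere overramified'' constraints.
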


Note that Theorem~\ref{T:iDH2} also provides evidence for the philosophy of Conjecture~\ref{C:indoffin} as the result is not changed by finitely many local conditions at finite places.
  Klagsbrun \cite{Klagsbrun2017a} determines the $\Z/3\Z$-moments of quotients of the class groups of random quadratic fields by primes above a fixed set of primes, which provides evidence for a generalization of Conjecture~\ref{C:atsc} to multiple places in the base field.
 Evidence for a generalization of Conjecture~\ref{C:atsc} to cubic extensions, as well as to multiple places in the base field, is given by Klagsbrun in \cite{Klagsbrun2017}, in which he determines the $\Z/2\Z$-moments of quotients of the class groups of cubic fields by primes above a fixed set of primes.

Our conjectures lead to some interesting predictions discussed in Section~\ref{S:conjs}.  We predict that the number of $\F_q$-points on a hyperelliptic curve does not affect the number of $\F_q$-points on the $p^k$-torsion of its Jacobian (even though all of the $\F_q$-points of the Jacobian are torsion points).
We predict that among imaginary quadratic extensions $K/\Q$ split completely at a rational prime $\ell$
into $\ell_1,\ell_2$, the distribution of $\Cl(\O_K)_p/\langle \ell_1-\ell_2 \rangle$ is not changed if we restrict to only those $K$ for which
$\ell_1-\ell_2$ is trivial in $\Cl(\O_K)_p.$

\subsection{Outline of the paper}
In Section~\ref{S:meas}, we describe the Cohen-Lenstra measures, their moments, and show that the moments determine the measures.  In Section~\ref{S:pmeas}, we do the same for the analog for pointed groups, and in particular we prove Lemmas~\ref{L:mom} and \ref{L:pmomdet}.  In Section~\ref{S:rproof}, we prove Theorem~\ref{T:RSC}, giving evidence towards the pointed moments of real quadratic Picard groups over rational function fields.  As corollaries, we obtain the non-pointed moments.  In Section~\ref{S:iproof}, we prove pointed moments of inert (at $\infty$) quadratic Picard groups over rational function fields.  
In Section~\ref{S:CLd}, we prove instances of Conjecture~\ref{C:atsc} for number fields, including Theorem~\ref{T:iDH2} (in Theorem~\ref{T:DH2}).  In Section~\ref{S:conjs}, we discuss some interesting consequences of our conjectures.

\subsection{Notation}
Throughout the paper we fix an odd prime $p$.
The letter $q$ will always denote a prime power.
 Let $\Sur(A,B)$ denote the surjective homomorphisms from $A$ to $B$.  For elements $g_i$ of an abelian group, we let $\langle g_1,\dots,g_u\rangle$ denote the subgroup generated by the $g_i$.  Let $\A$ be the set of finite abelian $p$-groups (or more precisely, a set with one representative from each isomorphism class of finite abelian $p$-groups).  For a group $A$, we write $A_p$ for the Sylow $p$-subgroup of $A$. 

\section{Cohen-Lenstra measures and moments}\label{S:meas}
In this section we will explain the Cohen-Lenstra measures on $\A$ and the moments of these measures.  
Each measure will be with respect to the $\sigma$-algebra that is the full power set of $\A$.
Let $u$ be an integer, $u\geq 0$.
  We define the Cohen-Lenstra measure $\mu^u$, following \cite[Example 5.9]{CL84}, so that for $A\in \A$, we have
$$
 \mu^{u}(A):=\frac{1}{|A|^{u}|\Aut(A)|}\prod_{k=1}^{\infty}(1-p^{-k-u}).$$
In particular, for $u=0$ the measure (called $\mu_{CL}$ above) describes the predicted distribution of Sylow $p$-subgroups of imaginary quadratic class groups and for $u=1$ the measure (called $\mu_{CL}^r$ above) describes the predicted distribution of Sylow $p$-subgroups of real quadratic class groups (see \cite[8.1]{CL84}).

We now give the moments of these distributions.  

\begin{proposition}[Lemma 3.2 of \cite{Wood2015a}]\label{P:mom}
For any $A\in \A$, we have that
$$
\sum_{B\in \A} |\Sur(B,A)| \mu^u(B)=|A|^{-u}.
$$
\end{proposition}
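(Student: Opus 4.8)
The plan is to compute the left-hand side directly by grouping the surjections from $B$ onto $A$ according to their kernel, and then summing over $B$ by first fixing the kernel. For a fixed finite abelian $p$-group $A$, a surjection $B \to A$ is the same data as a subgroup $C \leq B$ with $B/C \isom A$, together with a choice of isomorphism $B/C \to A$; the number of such isomorphisms is $|\Aut(A)|$. So I would rewrite
\begin{equation}\label{E:surcount}
\sum_{B\in\A} |\Sur(B,A)|\,\mu^u(B)
=|\Aut(A)|\sum_{B\in\A}\,\bigl|\{C\leq B : B/C\isom A\}\bigr|\,\mu^u(B).
\end{equation}
The key reorganization is to turn the sum over pairs $(B,C)$ with $B/C\isom A$ into a sum over extensions: fix the quotient $A$ and the kernel $C$, and count how many $B$ arise as extensions of $A$ by $C$. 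Plugging in the explicit formula $\mu^u(B)=|B|^{-u}|\Aut(B)|^{-1}\prod_{k\geq 1}(1-p^{-k-u})$ and using $|B|=|A|\,|C|$, the factor $|A|^{-u}$ pulls out in front, which is exactly the target; so the whole content is to show that the remaining weighted count of extensions equals $1$.

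The cleanest route to that remaining identity is to pass to the standard ``mass formula'' reformulation. The weighted count $\sum_{B}|\{C\leq B: B/C\isom A\}|/|\Aut(B)|$ counts pairs consisting of a group $B$ and a subgroup $C$ with prescribed quotient, each pair weighted by $|\Aut(B)|^{-1}$; such groupoid cardinalities can be evaluated by instead counting the fiber over each isomorphism type of the kernel $C$, with the subgroup $C$ carrying its own automorphism weight. Concretely, I expect the identity to reduce — after absorbing the $|\Aut(A)|$ from \eqref{E:surcount} and the $|C|^{-u}$ coming from $|B|^{-u}$ — to a sum over isomorphism classes of $C$ weighted by $|C|^{-u}|\Aut(C)|^{-1}$ times an $\Ext$/extension count, and the infinite product $\prod_{k\geq 1}(1-p^{-k-u})$ is precisely the normalizing constant that makes the total mass $1$. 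The decisive combinatorial input is the classical Cohen--Lenstra / Hall identity
$$
\sum_{C\in\A} \frac{|C|^{-u}}{|\Aut(C)|}\,(\text{number of extensions of }A\text{ by }C)
=\frac{1}{|\Aut(A)|}\prod_{k\geq 1}(1-p^{-k-u})^{-1},
$$
which, combined with \eqref{E:surcount} and the explicit $\mu^u$, yields $|A|^{-u}$.

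The main obstacle is the bookkeeping in the extension count: one must correctly match each subgroup $C\leq B$ (with $B/C\isom A$) to an extension class while tracking the automorphism factors, since the naive sum overcounts by the automorphisms of $C$ and of $A$ that do not lift to $B$. The careful way to handle this is a groupoid/mass-formula argument: count the groupoid of short exact sequences $0\to C\to B\to A\to 0$ in two ways, once fibered over $B$ and once fibered over $C$, so that the $\Aut(B)$-weight on one side is exactly balanced by the $\Aut(C)$- and $\Aut(A)$-weights and the set of extension classes on the other. Since the statement is cited as Lemma~3.2 of \cite{Wood2015a}, I would expect the actual proof to invoke that reference or a known generating-function identity for these $p$-group sums rather than redoing the extension bookkeeping from scratch; the self-contained version above is the structure I would fill in if the identity had to be established directly.
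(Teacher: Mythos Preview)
The paper does not give its own proof of this proposition: it simply records the result as Lemma~3.2 of \cite{Wood2015a} and moves on. Your closing remark anticipating exactly this is correct, so there is no ``paper's proof'' to compare your argument to.

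On the merits of your self-contained sketch: the strategy (rewrite surjections via kernels, pull out $|A|^{-u}$, and reduce to a mass identity over the kernel $C$) is right, but the displayed ``decisive combinatorial input'' is not correct as written. Interpreting ``number of extensions of $A$ by $C$'' as $|\Ext^1(A,C)|$, the identity fails already for $A=\Z/p\Z$, $u=0$: the left side equals $\sum_C |C[p]|/|\Aut(C)| = 2\prod_{k\ge1}(1-p^{-k})^{-1}$, while your right side is $(p-1)^{-1}\prod_{k\ge1}(1-p^{-k})^{-1}$. The point is that in the groupoid count the extension classes and the $\Hom$-part of the automorphism group cancel exactly, because $|\Ext^1(A,C)|=|\Hom(A,C)|$ for finite abelian groups; what survives is simply
\[
\sum_{B\in\A}\frac{|\Sur(B,A)|}{|B|^{u}|\Aut(B)|}
=\frac{1}{|A|^{u}}\sum_{C\in\A}\frac{1}{|C|^{u}|\Aut(C)|},
\]
after which the only input needed is the classical normalization $\sum_{C}\,|C|^{-u}|\Aut(C)|^{-1}=\prod_{k\ge1}(1-p^{-k-u})^{-1}$. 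So your plan works, but the identity you isolate as the crux should be replaced by the cancellation $|\Ext^1|=|\Hom|$ together with that normalization; there is no residual ``extension count'' factor in the reduced sum.
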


In particular, applying Proposition~\ref{P:mom} when $A=1$ shows that $\mu^u$ is a probability measure.
We now give another description of $\mu^u$ (see also \cite[Theorem 10.9]{Lengler} for this description of $\mu^u$).

\begin{proposition}
Take a random group $G$ according to $\mu^0$, and then select $u$ elements $g_i\in G$ uniformly and independently at random.  Then $G/\langle g_1,\dots,g_u \rangle$ is distributed according to $\mu^u$.
\end{proposition}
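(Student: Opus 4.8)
The plan is to identify the pushforward measure through its moments and then appeal to moment-determinacy. Write $\nu$ for the distribution of $G/\langle g_1,\dots,g_u\rangle$ produced by the described process, where $G$ is drawn from $\mu^0$. By Proposition~\ref{P:mom} the target $\mu^u$ has $A$-moment equal to $|A|^{-u}$ for every $A\in\A$, and—by the same moment-determinacy underlying the pointed statement of Lemma~\ref{L:pmomdet} (its non-pointed analog in this section)—a measure on $\A$ is pinned down by these moments. So it suffices to verify that for every $A\in\A$,
\[
\sum_{C\in\A}|\Sur(C,A)|\,\nu(C)=|A|^{-u}.
\]

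The key reformulation is that choosing $u$ independent uniform elements $g_1,\dots,g_u\in G$ is the same as choosing a uniformly random homomorphism $\phi\colon\Z^u\to G$ via $\phi(e_i)=g_i$, under which $\langle g_1,\dots,g_u\rangle=\im\phi$ and $G/\langle g_1,\dots,g_u\rangle=\coker\phi$. Hence the $A$-moment of $\nu$ equals $\E_{G,\phi}\,|\Sur(\coker\phi,A)|$, with $\phi$ uniform in $\Hom(\Z^u,G)$. First I would observe that a surjection $\coker\phi\to A$ is exactly a surjection $\bar\psi\colon G\to A$ satisfying $\bar\psi\circ\phi=0$, so for fixed $G$ one may sum over $\bar\psi\in\Sur(G,A)$ and compute, for each such $\bar\psi$, the probability over $\phi$ that $\bar\psi\circ\phi=0$.

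The central computation is this last probability. The condition $\bar\psi\circ\phi=0$ says precisely that every $g_i$ lies in $\ker\bar\psi$; since $\bar\psi$ is surjective, $|\ker\bar\psi|=|G|/|A|$, so each of the $u$ independent uniform choices lands in $\ker\bar\psi$ with probability $1/|A|$, giving joint probability $|A|^{-u}$, independent of both $G$ and $\bar\psi$. Summing over $\bar\psi$ and then over $G$ (all terms nonnegative, so the interchange is justified by Tonelli) yields
\[
\sum_{C\in\A}|\Sur(C,A)|\,\nu(C)=|A|^{-u}\sum_{G\in\A}|\Sur(G,A)|\,\mu^0(G)=|A|^{-u},
\]
where the final equality is Proposition~\ref{P:mom} in the case $u=0$.

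I expect the computation itself to be essentially automatic, so the only genuine subtlety is the appeal to moment-determinacy: matching the moments $|A|^{-u}$ only identifies $\nu$ once one knows these moments determine a \emph{unique} measure on $\A$, which is the non-pointed counterpart of Lemma~\ref{L:pmomdet}. A secondary point requiring care is the bookkeeping interchange of the infinite sum over $G\in\A$ with the finite sum over surjections, which is harmless here by nonnegativity of all terms.
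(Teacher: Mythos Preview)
Your argument is correct, and it takes a genuinely different route from the paper's own proof. The paper computes the pushforward probability $\nu(K)$ directly: it invokes a combinatorial identity of Cohen and Lenstra (\cite[Proposition 4.3]{CL84}) to evaluate, for groups $B$ of each fixed order, the weighted count of $u$-tuples in $B$ whose quotient is isomorphic to $K$, and then sums over orders using another Cohen--Lenstra identity (\cite[Corollary 3.7]{CL84}) to obtain the closed form $|K|^{-u}|\Aut(K)|^{-1}\prod_{k\geq 1}(1-p^{-k-u})=\mu^u(K)$. Your approach instead bypasses these identities entirely: you compute the $A$-moment of $\nu$ by the clean observation that $\Sur(\coker\phi,A)$ is in bijection with $\{\bar\psi\in\Sur(G,A):\bar\psi\circ\phi=0\}$, reduce to the $u=0$ case of Proposition~\ref{P:mom}, and then invoke moment-determinacy (Proposition~\ref{P:momdet}) to conclude $\nu=\mu^u$. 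Your method is more self-contained within the paper and arguably more in keeping with its moment-centric philosophy; the only cost is that you rely on Proposition~\ref{P:momdet}, which in the paper's ordering appears \emph{after} the proposition you are proving---but since the proof of Proposition~\ref{P:momdet} uses only Proposition~\ref{P:mom} and Lemma~\ref{L:la}, there is no circularity, and you should simply cite Proposition~\ref{P:momdet} by name rather than calling it ``the non-pointed counterpart of Lemma~\ref{L:pmomdet}.'' The paper's direct computation, on the other hand, yields the explicit value of $\nu(K)$ without passing through uniqueness, at the price of importing two external identities from \cite{CL84}.
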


\begin{proof}
Let $K$ be a finite abelian $p$-group of order $p^b$.
If we apply \cite[Proposition 4.3]{CL84}  (with their $k=\infty$),
we find that for any integers $a\geq b$,
\begin{equation}\label{E:fixedsize}
\sum _{\substack{B\in \A\\|B|=p^a}}\mu^0(B) \frac{1}{|B|^u} \sum_{\substack{b_1,\dots, b_u\in B\\ B/\langle b_1,\dots, b_u \rangle \isom K}}1=p^{-bu}   \frac{1}{|\Aut(K)|} \prod_{k=1}^{\infty}(1-p^{-k})  \sum_{\substack{C\in \A\\|C|=p^{a-b}}} \frac{|\Sur(\Z^u,C)|}{|C|^u|\Aut(C)|}.
\end{equation}
By \cite[Corollary 3.7]{CL84} (with their $k=u$ and $s=0$), we have 
$$
\sum_{a\geq b} \sum_{\substack{C\in \A\\|C|=p^{a-b}}} \frac{|\Sur(\Z^u,C)|}{|C|^u|\Aut(C)|}=\prod_{1\leq j\leq u} (1-p^{-j})^{-1}.
$$
Thus, summing Equation~\eqref{E:fixedsize} over all $a\geq b$, we have
$$
\sum _{B\in \A}\mu^0(B) \frac{1}{|B|^u} \sum_{\substack{b_1,\dots, b_u\in B\\ B/\langle b_1,\dots, b_u \rangle \isom K}}1= \frac{1}{|K|^u|\Aut(K)|} \prod_{k=1}^{\infty}(1-p^{-k}) \prod_{1\leq j\leq u} (1-p^{-j})^{-1}.
$$
The left-hand side is the probability of producing $K$ via the process described in the proposition, and the right-hand side is $\mu^u(K).$
\end{proof}

In \cite{CL84}, it is this second description (of $\mu^1$) that in fact comes first, and the probabilities for individual groups are later computed in \cite[Example 5.9]{CL84}. 
One can also prove that the value $\mu^u(B)$ is the limit as $n\ra \infty$ of the probability that $B$ is the cokernel of a random matrix from Haar measure in $M_{n \times (n+u)}(\Z_p)$, as done by Friedman and Washington \cite{FW89} in the case $u=0$ (using the same argument as Friedman and Washington).  At first, it seems this could give a convenient proof of the moments in Proposition~\ref{P:mom}.  However, since the limit in $n$ does not a priori commute with the sum over $B$ (though in fact this can be shown), this approach turns out to be  less convenient than that above.

In fact, the moments in Proposition~\ref{P:mom} determine the measure $\mu^u.$

\begin{proposition}\label{P:momdet}
 If $\mu$ is a measure on $\A$ such that for every $A\in \A$ we have
$$\sum_{B\in \A} \Sur(B,A) \nu(B)=|A|^{-u},$$
then
$\nu=\mu^u.$
\end{proposition}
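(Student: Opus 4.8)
The plan is to prove this as a moment-uniqueness statement: show that the map sending a measure $\nu$ on $\A$ to its sequence of surjection-moments $\bigl(\sum_{B\in\A}|\Sur(B,A)|\,\nu(B)\bigr)_{A\in\A}$ is injective on the relevant class of measures, so that any $\nu$ sharing the moments of $\mu^u$ must equal it. First I would record two reductions. Taking $A=1$ in the hypothesis gives $\sum_B\nu(B)=1$, so $\nu$ is a probability measure; and by Proposition~\ref{P:mom} the measure $\mu^u$ already realizes the moments $|A|^{-u}$. Hence $\lambda:=\nu-\mu^u$ is a finite signed measure of total mass $0$ and total variation at most $2$, all of whose surjection-moments $\sum_B|\Sur(B,A)|\,\lambda(B)$ vanish. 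The goal becomes: a signed measure of bounded total variation on $\A$ with all surjection-moments zero is identically zero.

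The key step is to exhibit an explicit inversion expressing each point mass $\lambda(A)$ as a convergent linear combination of the (vanishing) moments of $\lambda$. I would obtain this from M\"obius inversion on the poset of finite abelian $p$-groups. The basic combinatorial input is that every homomorphism factors through its image, giving $|\Hom(B,A)|=\sum_{C\le A}|\Sur(B,C)|$ with the sum over subgroups $C\le A$; grouping subgroups by isomorphism type converts $\Hom$-moments into $\Sur$-moments and back. Since $A\mapsto|\Sur(\cdot,A)|$ is, up to the factors $|\Aut(A)|$, ``triangular'' for the relation ``is a quotient of'' (the diagonal term being $|\Aut(A)|>0$), one solves for $\lambda(A)$ as an alternating sum $\lambda(A)=\sum_{A'}c_{A,A'}\bigl(\sum_B|\Sur(B,A')|\,\lambda(B)\bigr)$ over groups $A'$ admitting $A$ as a quotient, the coefficients $c_{A,A'}$ being assembled from the M\"obius function of the subgroup lattice together with the factors $|\Aut(\cdot)|^{-1}$, so that $\sum_{A'}c_{A,A'}|\Sur(B,A')|=1$ if $B\isom A$ and $0$ otherwise. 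Once this formula is justified, every inner sum vanishes, whence $\lambda(A)=0$ for all $A$ and $\nu=\mu^u$.

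The hard part will be convergence, since $\A$ contains groups of unbounded order and both the inversion sum and the interchange of summation used to derive it run over infinite index sets. Here one simplification helps: for each fixed $A'$ the inner sum is already controlled, because $\sum_B|\Sur(B,A')|\,|\lambda|(B)\le\sum_B|\Sur(B,A')|\bigl(\nu(B)+\mu^u(B)\bigr)=2|A'|^{-u}$ by the hypothesis and Proposition~\ref{P:mom}. Thus the entire convergence question reduces to the $\ell^1$-summability $\sum_{A'}|c_{A,A'}|\,|A'|^{-u}<\infty$ of the M\"obius-type coefficients. This is the technical heart: the $c_{A,A'}$ involve $p$-binomial factors that can grow in the order of $A'$, and one must show their signed structure together with the size constraint (that $A'$ has $A$ as a quotient) and the subexponential growth of $\#\{A':|A'|=p^n\}$ (on the order of the partition function $p(n)$) keeps this series finite, licensing all rearrangements. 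Should one prefer to avoid reproving the inversion, the alternative is to cite a general moment-determinacy theorem for random finite abelian groups as in \cite{Wood2015a}: the moments here are $|A|^{-u}\le 1$, which comfortably satisfy the growth hypothesis of such a theorem, and uniqueness follows at once.
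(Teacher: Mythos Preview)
Your plan diverges from the paper's. The paper does not invert the moment matrix at all; instead it normalizes and applies a short contraction lemma (Lemma~\ref{L:la}): if $a_{i,j}\ge 0$, $a_{i,i}=1$, and every row sum equals some $\alpha<2$, then the system $\sum_j a_{i,j}x_j=1$ (for all $i$) has at most one nonnegative solution. One takes $a_{i,j}=|A_i|^u|\Sur(A_j,A_i)|/\bigl(|A_j|^u|\Aut(A_j)|\bigr)$ and $x_j=|A_j|^u|\Aut(A_j)|\nu(A_j)$, and checks via Proposition~\ref{P:mom} that each row sum equals $\prod_{k\ge 1}(1-p^{-k-u})^{-1}<2$. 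The lemma itself is a two-line contraction: the hypotheses force $x_j,y_j\in[0,1]$, and then $|x_i-y_i|\le(\alpha-1)\sup_j|x_j-y_j|$.

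Your M\"obius-inversion outline is correct in spirit, and you rightly isolate the crux as the absolute summability $\sum_{A'}|c_{A,A'}|\,|A'|^{-u}<\infty$ needed to license Fubini. But you do not prove it, and it is not free: the inverse coefficients $c_{A,A'}$ are alternating Hall-type sums over chains in the quotient poset, and bounding them uniformly over the infinitely many $A'$ admitting $A$ as a quotient is genuine work you have left undone (for $A=1$ one can show $c_{1,A'}$ vanishes off elementary abelian $A'$ and decays like $\prod_i(p^i-1)^{-1}$, but the general case still requires an argument). Without that bound the interchange of sums is unjustified and the main line of your proof is incomplete. The virtue of the paper's route is exactly that it replaces control of the \emph{inverse} matrix by a bound on the row sums of the \emph{forward} matrix, and that bound drops out of the moment identity already in hand. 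Your fallback of citing a determinacy theorem from \cite{Wood2015a} is legitimate---the paper itself notes this option---but then the inversion discussion is beside the point.
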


The $u=0$ case of Proposition~\ref{P:momdet} is Lemma 8.2 in \cite{Ellenberg2016}. 
Proposition~\ref{P:momdet}  follows from the proof of \cite[Theorem 8.3]{Wood2017} (see \cite[Theorem 3.1]{Wood2015a} for a statement).
 We give a much simpler proof here following \cite{Ellenberg2016} and in particular using the following lemma of infinite dimensional linear algebra, the argument for which is given in 
the proof of Lemma 8.2 in \cite{Ellenberg2016} (see also \cite[Lemma 4.7]{Boston2017} for a stronger statement).

\begin{lemma}\label{L:la}
Let $a_{i,j}$ be non-negative real numbers indexed by pairs of natural numbers $i,j$, such that
there is an $\alpha<2$ so that 
 for all
$i$, we have $a_{i,i}=1$ and $\sum_{j} a_{ij}=\alpha$.  Let $x_j,y_j$ be non-negative reals
indexed by natural numbers $j$.  If for all $i,$
$$
\sum_j a_{i,j} x_j=\sum_j a_{i,j}y_j =1,
$$ 
then $x_j=y_j$ for all $j$.
\end{lemma}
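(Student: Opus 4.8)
The plan is to reduce the statement to a uniqueness result for the homogeneous system, to establish an a priori bound on any non-negative solution using the normalization $a_{i,i}=1$, and then to run a short contraction argument powered by the hypothesis $\alpha<2$.

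First I would set $z_j:=x_j-y_j$, so that the two equations combine into the homogeneous system $\sum_j a_{i,j}z_j=0$ for all $i$; it suffices to prove $z_j=0$ for all $j$. The $z_j$ need not be non-negative, so before estimating I would bound the solutions themselves. Here is where the diagonal normalization enters: since every $a_{i,j}\geq 0$ and $x_j\geq 0$, for each fixed $i$ the single term $a_{i,i}x_i=x_i$ is dominated by the whole (convergent) row sum, giving $x_i\leq\sum_j a_{i,j}x_j=1$. Thus $0\leq x_j\leq 1$ and likewise $0\leq y_j\leq 1$, so $|z_j|\leq 1$ and in particular $M:=\sup_j|z_j|$ is finite (indeed $M\leq 1$). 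Note also $\alpha=\sum_j a_{i,j}\geq a_{i,i}=1$, so $\alpha\in[1,2)$ and the off-diagonal row sum is $\sum_{j\neq i}a_{i,j}=\alpha-1\in[0,1)$.

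Next I would isolate the diagonal term in the homogeneous equation. Since $a_{i,i}=1$, we can rewrite $\sum_j a_{i,j}z_j=0$ as $z_i=-\sum_{j\neq i}a_{i,j}z_j$. The series converges absolutely because $\sum_{j\neq i}a_{i,j}|z_j|\leq(\alpha-1)M<\infty$, so we may take absolute values term by term to get
$$
|z_i|\leq\sum_{j\neq i}a_{i,j}|z_j|\leq(\alpha-1)\,M\qquad\text{for every }i.
$$
Taking the supremum over $i$ yields $M\leq(\alpha-1)M$. Since $M$ is finite and $\alpha-1<1$, this forces $M=0$, hence $z_j=0$ and $x_j=y_j$ for all $j$, as desired.

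The contraction inequality is routine once it is set up; I expect the genuine obstacle to be the a priori boundedness step, namely justifying that $M<\infty$ so that the inequality $M\leq(\alpha-1)M$ can be closed. This is exactly what the normalization $a_{i,i}=1$ buys us (each coordinate is bounded by its own row sum, which equals $1$ under the constraints), and without an entrywise bound on $x$ and $y$ the argument would not go through. A secondary point to check carefully is the absolute convergence of the off-diagonal series, which I would handle simultaneously via the uniform bound $|z_j|\leq M$ and the finite row sum $\alpha$.
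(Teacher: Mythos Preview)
Your argument is correct and is essentially the same as the one the paper cites from \cite[proof of Lemma~8.2]{Ellenberg2016}: use $a_{i,i}=1$ to get the uniform bound $x_i,y_i\leq 1$, isolate the diagonal to obtain $|x_i-y_i|\leq(\alpha-1)\sup_j|x_j-y_j|$, and conclude by the contraction $\alpha-1<1$.
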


\begin{proof}[Proof of Proposition~\ref{P:momdet}]
Enumerate the finite abelian $p$-groups as $A_i.$
 We apply Lemma~\ref{L:la} with 
$$a_{i,j}=\frac{|A_i|^u|\Sur(A_j,A_i)|}{|A_j|^u|\Aut(A_j)|}$$
and $x_j=|A_j|^u|\Aut(A_j)|\nu(A_j)$ and $y_j=|A_j|^u|\Aut(A_j)|\mu^u(A_j)$.
It remains to check that $$\sum _j \frac{|A_i|^u|\Sur(A_j,A_i)|}{|A_j|^u|\Aut(A_j)|} <2.$$
However, we have that 
 $$
\sum _j \frac{|A_i|^u|\Sur(A_j,A_i)|}{|A_j|^u|\Aut(A_j)|} =
\sum _j |A_i|^u|\Sur(A_j,A_i)| \mu^{u}(A_j)\prod_{k=1}^{\infty}(1-p^{-k-u})^{-1}=\prod_{k=1}^{\infty}(1-p^{-k-u})^{-1}.
$$
So it remains to check that $\prod_{k=1}^{\infty}(1-p^{-k-u})>1/2.$  The expression is decreasing 
in $p$ and $u$, so it remains to check for $p=3$ and $u=0$, which can be done simply.
\end{proof}

\section{Measures and moments for pointed groups}\label{S:pmeas}

We have the measure $\mu$ on $\Ad$ defined in the introduction by choosing a group according to $\mu^0$ and picking a group element uniformly at random.  We will now analyze the moments of this measure analogously to the Cohen-Lenstra measures $\mu^u$ in Section~\ref{S:meas}.

Recall from the introduction, we have the following.

\noindent
{\bf Lemma~\ref{L:mom}.} (Pointed moments)
For any finite abelian $p$-group $A$ and $a\in A$, we have
$$
\sum_{(B,b)\in\Ad} |\Sur((B,b),(A,a))|\mu(B,b)=\frac{1}{|A|}.
$$

\begin{proof}
We have 
\begin{align*}
\sum_{(B,b)\in\Ad} |\Sur((B,b),(A,a))|\mu(B,b)&=\sum_{B\in\A} \frac{1}{|B|} \sum_{b\in B}|\Sur((B,b),(A,a))|\mu^0(B)\\
&=\sum_{B\in\A} \frac{\mu^0(B)}{|B|} \sum_{\phi\in \Sur(B,A)} |\ker(\phi)|\\
&=\sum_{B\in\A} \frac{\mu^0(B)}{|A|} |\Sur(B,A)|=\frac{1}{|A|},\\
\end{align*}
where the last equality is by Proposition~\ref{P:mom}.
\end{proof}

\begin{lemma}
We have $$\mu(B,b)=\frac{1}{|B||\Aut(B,b)|}\prod_{k=1}^\infty (1-p^{-k}).$$
\end{lemma}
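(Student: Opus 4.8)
The final statement to prove is the explicit formula
$$\mu(B,b)=\frac{1}{|B||\Aut(B,b)|}\prod_{k=1}^\infty (1-p^{-k}).$$

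The plan is to compute $\mu(B,b)$ directly from its definition and then reinterpret the resulting count as an automorphism count. By construction, $\mu(B,b)$ is the probability that choosing a group $B'$ according to $\mu^0$ and then a uniform random element $b'\in B'$ produces a pointed group isomorphic to $(B,b)$. First I would fix a representative $(B,b)$ and observe that the event ``$(B',b')\isom (B,b)$'' can only happen when $B'\isom B$ as a group, so the probability factors as $\mu^0(B)$ times the conditional probability that a uniform element of $B$ lands in the isomorphism class of $b$ under the pointed-isomorphism relation. Substituting the explicit value $\mu^0(B)=\frac{1}{|\Aut(B)|}\prod_{k=1}^\infty(1-p^{-k})$, it remains to identify that conditional probability.

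The key step is the orbit-counting argument. The group $\Aut(B)$ acts on the underlying set $B$, and two elements $b,b'\in B$ give isomorphic pointed groups $(B,b)\isom(B,b')$ precisely when they lie in the same $\Aut(B)$-orbit. The number of elements $b'\in B$ with $(B,b')\isom (B,b)$ is therefore the size of the orbit of $b$, which by the orbit-stabilizer theorem equals $|\Aut(B)|/|\Stab_{\Aut(B)}(b)|$. The stabilizer of $b$ in $\Aut(B)$ is exactly $\Aut(B,b)$, the automorphisms of $B$ fixing the marked point, so the orbit has size $|\Aut(B)|/|\Aut(B,b)|$. Hence a uniform random element of $B$ lands in the class of $b$ with probability $\frac{|\Aut(B)|}{|B||\Aut(B,b)|}$.

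Putting the two pieces together gives
\begin{align*}
\mu(B,b)&=\mu^0(B)\cdot\frac{|\Aut(B)|}{|B||\Aut(B,b)|}\\
&=\frac{1}{|\Aut(B)|}\prod_{k=1}^\infty(1-p^{-k})\cdot\frac{|\Aut(B)|}{|B||\Aut(B,b)|}\\
&=\frac{1}{|B||\Aut(B,b)|}\prod_{k=1}^\infty(1-p^{-k}),
\end{align*}
as claimed. The only point requiring care—the main conceptual step rather than a genuine obstacle—is verifying that the pointed-isomorphism relation on elements of a \emph{fixed} group $B$ is exactly the $\Aut(B)$-orbit relation, so that the orbit-stabilizer identification of $\Aut(B,b)$ with $\Stab_{\Aut(B)}(b)$ applies cleanly; after that the computation is purely formal.
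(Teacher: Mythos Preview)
Your proof is correct and follows the same approach as the paper: both arguments use that $\Aut(B)$ acts on $B$ with the orbit of $b$ consisting of those $h$ with $(B,h)\isom(B,b)$ and stabilizer $\Aut(B,b)$, then apply orbit--stabilizer. The paper compresses this into a single line, but the content is identical.
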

\begin{proof}
As $\Aut(B)$ acts on the elements of $B$, we have
$|\Aut(B)|=|\Aut(B,b)|\cdot\#\{h\in B\ |\ (B,h)\isom (B,b)\},$ and the lemma follows.
\end{proof}

In fact, the moments of Lemma~\ref{L:mom} determine the distribution $\mu$.

\noindent
{\bf Lemma~\ref{L:pmomdet}.}
 If $\nu$ is a measure on $\Ad$ such that for every $(A,a)\in\Ad$ we have
$$
\sum_{(B,b)\in\Ad} |\Sur((B,b),(A,a))|\nu(B,b)=\frac{1}{|A|},
$$
then $\nu=\mu$.

\begin{proof}
 Enumerate the finite abelian pointed $p$-groups as $(A_i,a_i).$
 We apply Lemma~\ref{L:la} with 
$$a_{i,j}=\frac{|A_i||\Sur((A_j,a_j),(A_i,a_i))|}{|A_j||\Aut(A_j,a_j)|}$$
and $x_j=|A_j||\Aut(A_j,a_j)|\nu(A_j,a_j)$ and $y_j=|A_j||\Aut(A_j,a_j)|\mu(A_j,a_j)$.
It remains to check that $$\sum _j \frac{|A_i||\Sur((A_j,a_j),(A_i,a_i))|}{|A_j||\Aut(A_j,a_j)|}<2.$$
However, we have that 
\begin{align*}
 \sum _j \frac{|A_i||\Sur((A_j,a_j),(A_i,a_i))|}{|A_j||\Aut(A_j,a_j)|}& =
\sum _j |A_i||\Sur((A_j,a_j),(A_i,a_i))| \mu(A_j,a_j)\prod_{k=1}^{\infty}(1-p^{-k})^{-1}\\&=\prod_{k=1}^{\infty}(1-p^{-k})^{-1}.
\end{align*}
So it remains to check that $\prod_{k=1}^{\infty}(1-p^{-k})>1/2,$
as in Proposition~\ref{P:momdet}.
\end{proof}

\section{Theorems for real quadratic function fields}\label{S:rproof}
In this section we prove Theorem~\ref{T:RSC}.  The overall strategy is the same as in \cite[Sections 5 and 6]{Boston2017} and \cite[Section 3]{Wood2017a}, which both prove function field results about non-abelian analogs of class groups.
However, many of the details are different and the argument below is self-contained.  We
first translate the problem of interest into one of counting certain extensions of $\F_q(t)$.  We then will use the existence of a Hurwitz scheme parametrizing such extensions (as they are equivalently curves with a map to the line), which comes from work of 
Ellenberg, Venkatesh, and Westerland \cite{Ellenberg2012}, building on work of Romagny and Wewers \cite{Romagny2006}.
Unlike in \cite{Boston2017} and \cite{Wood2017a}, in this paper we also use the homological stability results of 
Ellenberg, Venkatesh, and Westerland \cite{Ellenberg2016} to have a bound on the $i$th cohomology groups of the Hurwitz schemes that is exponential in $i$.

\subsection{Notation}\label{S:FFnot}
Let $Q=\F_q(t)$ for this section and the next.  
Let $SCQ$ be the set of all quadratic extensions of $Q$ split completely at $\infty$.
For $K\in SCQ$, let $\infty_1, \infty_2$ be the two places
of $K$ over $\infty$.  
We define $\Pic(C_K)$ to be the Picard group of the unique smooth, proper curve $C_K$ over $\F_q$ associated to $K$.   Let $M_K$ be the set of places of $K$, which are in bijection with the closed points of $C_K$. 
We have
$$
\Pic(C_K)=\left(\bigoplus_{v\in M_K} \Z v\right)/\{div(f)\ |f\in K \}.
$$
There is a natural map $\Pic(C_K)\ra \Z$ sending a place $v$ corresponding to a closed point of degree $d$ to $d$, with kernel $\Pic^0(C_K)$.  Let $K^{un,ab}$ be the maximal unramified abelian extension of $K$.  By class field theory, we have that the profinite completion $\widehat{\Pic(C_K)}$ is isomorphic to $\Gal(K^{un,ab}/K)$.

Note that $\delta_K:=\infty_1-\infty_2$ is not a well defined element in $\Pic^0(C_K)$ because it depends on the ordering of $\infty_1,\infty_2$, but it is well defined up to $\pm$, and thus the isomorphism class of the pointed group $(\Pic^0(C_K),\delta_K)$ is well-defined.  The remainder of this section is devoted to the proof of the pointed moments of Picard groups of real quadratic function fields.

\subsection{Counting pointed surjections}
In order to prove Theorem~\ref{T:RSC}, we will first translate the problem from one of counting pointed surjections to one of counting certain extensions of $K$.
If $K/\F_q(t)$ is a quadratic extension corresponding to $\phi: C_K \ra \P^1_{\F_q}$, we have $\phi^*(\infty)\in \Pic(C_K)$, where if $\infty$ splits into $\prod_i \infty_i^{e_i}$, we have $\phi^*(\infty)=\sum_i e_i\infty_i.$

\begin{proposition}\label{P:whichL}
Let $K/\F_q(t)$ be a quadratic extension corresponding to the map $\phi: C_K \ra \P^1_{\F_q}$ of curves over $\F_q$, and let $d$ be the greatest common divisor of the degrees of the points of $C_K$.
Let $L$ be a subfield $K\sub L \sub K^{un,ab}$ and let $P\sub \widehat{\Pic(C_K)}$ be the corresponding
subgroup via Galois theory.  Then $d\phi^*(\infty) \in P$ if and only if 
\begin{enumerate}
\item $L/Q$ is Galois, and
\item $\Gal(K/Q)$, by conjugation in $\Gal(L/Q)$, acts as inversion on $\Gal(L/K).$ 
\end{enumerate}
Also, since $d|2$, if $P$ is odd index, then $d\phi^*(\infty) \in P$ if and only if $\phi^*(\infty) \in P.$
\end{proposition}

\begin{proof}
Let $\sigma$ be the generator of $\Gal(K/Q)$.
First, suppose that $d\phi^*(\infty)\in P$. If $x$ is a point of $C_K$, then in $\widehat{\Pic(C_K)}$ we have $\sigma(x)+x=\deg(x)\phi^*(\infty).$
Thus for $D\in P$, we have $\sigma(D)=\deg(D)(\infty_1 +\infty_2)-D\in P$.  Thus $L/Q$ is Galois.  For any $D\in \Pic(C_K)$, we have 
$\sigma(D)+D=\deg(D)(\infty_1 +\infty_2)\in P$, so $\sigma$ acts as inversion on $\Gal(L/K).$

Second, suppose that $L/Q$ is Galois, and $\sigma$ acts as inversion on $\Gal(L/K).$
Let $D$ be a divisor of degree $d$.
Since $d\phi^*(\infty)=D+\sigma(D),$ it must be in the kernel of the map to  $\Gal(L/K),$ i.e. $P$.
\end{proof}
 
 Let $Q$ be a global field with a  place ${\v}$.  Let $H$ be a finite group, and $c$ a conjugacy class of $H$. We fix a separable closure $\bar{Q}_{\v}$ of the completion $Q_{\v}$.  Then, inside $\bar{Q}_{\v}$ we have the separable closure $\bar{Q}$ of $Q$.  This gives a map $\Gal(\bar{Q}_{\v}/Q_{\v})  \ra \Gal(\bar{Q}/Q)$, and in particular  distinguished decomposition and  inertia groups  in $\Gal(\bar{Q}/Q)$ at ${\v}$.  
We define (as in \cite[Section 10.2]{Ellenberg2012}) a \emph{marked $(H,c)$ extension} of $Q$ to be $(L,\pi,m)$
such that 
$L/Q$ is a Galois extension of fields, 
$\pi$ is an isomorphism $\pi: \Gal(L/Q)\isom H$ such that all inertia groups in $\Gal(L/Q)$ (except for possibly the one at ${\v}$) have image in $\{1\}\cup c$, and $m$, the \emph{marking}, is a homomorphism $L_{\v}:=L\tensor_Q Q_{\v} \ra \bar{Q}_{\v}$. 
 Note that restriction to $L$ gives a bijection between homomorphisms 
$L_{\v} \ra \bar{Q}_{\v}$ and homomorphisms $L\ra \bar{Q}$.  
Two marked $(H,c)$ extensions $(L_1,\pi_1,m_1)$ and $(L_2,\pi_2,m_2)$ are isomorphic when there is an isomorphism $L_1\ra L_2$ taking $\pi_1$ to $\pi_2$ and $m_1$ to $m_2$.  
The marking $m$ in a marked $(G',c)$ extension $(L,\pi,m)$ gives a map $\Gal(\bar{Q}_{\v}/Q_{\v}) \ra \Gal(L/Q)$.
 Composing with $\pi$ we get an \emph{infinity type} $\Gal(\bar{Q}_{\v}/Q_{\v}) \ra G'$.

\begin{proposition}\label{P:countsur}
Let $A$ be a finite abelian group of odd order, $H:=A \rtimes_{-1} S_2$ with the action of $S_2$ by inversion, and $c$ the conjugacy class of order $2$ elements of $H$.
When $K/Q$ is a quadratic extension, we have a $|A|$-to-$1$ map
$$
\{\textrm{isom. classes of marked $(H,c)$-extns $L/Q$}|  L^A\isom K \} \ra \Sur({\Pic^0(C_K)},A).
$$
When $K\in SCQ$ and $a\in A$, 
the surjections that send $\delta_K \ra  a$ correspond to those $L$ for which $2\Frob_{\infty_1}\mapsto  a \in \Gal(L/Q)$.  In particular, 
we have for $a\in A$:
$$
|\Sur(({\Pic^0(C_K)},\delta_K),(A,a))|=\frac{\#\{\textrm{isom. classes of marked $(H,c)$-extns $L/Q$}|  2\Frob_{\infty_1}\mapsto  a, L^A\isom K \}}{|A|}.
$$
Moreover, for the $L$ above we have $\Nm \Disc(L)=\Nm \Disc(K)^{|A|}$.
\end{proposition}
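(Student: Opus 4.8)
The plan is to convert the quantity $|\Sur((\Pic^0(C_K),\delta_K),(A,a))|$ into a count of Galois extensions, and then into the group theory of $H=A\rtimes_{-1}S_2$, using Proposition~\ref{P:whichL} to recognize the $(H,c)$-condition and class field theory (via $\widehat{\Pic(C_K)}\isom\Gal(K^{un,ab}/K)$) to recognize the $A$-extensions. First I would record three facts about $H$, all consequences of $|A|$ being odd, which drive the whole count: every element of $H\setminus A$ is an involution and these form a single conjugacy class $c$ of size $|A|$ (conjugating $(a,\tau)$ by $b\in A$ gives $(a+2b,\tau)$, and $2$ is invertible on $A$); the center $Z(H)$ is trivial (inversion has no nonzero fixed points); and $|\Aut(H)|=|A|\,|\Aut(A)|$, since an automorphism preserves the characteristic subgroup $A$, giving $\psi\in\Aut(A)$, and may send the involution $(0,\tau)$ to any of the $|A|$ elements of $c$, every such choice being consistent.

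Next I would set up the dictionary. A surjection in $\Sur(\widehat{\Pic(C_K)},A)$ is the same as an everywhere-unramified $A$-extension $L/K$ together with an identification $\Gal(L/K)\isom A$, and by the odd-index form of Proposition~\ref{P:whichL} its kernel contains $\phi^*(\infty)$ exactly when $L/Q$ is Galois with $\Gal(K/Q)$ acting by inversion on $\Gal(L/K)$. Writing $\widehat{\Pic(C_K)}\isom\Pic^0(C_K)\times\widehat{\mathbb{Z}}$ via a degree-one class and using that $\phi^*(\infty)$ has degree $2$ with $2$ invertible on $A$, each $\rho_0\in\Sur(\Pic^0(C_K),A)$ extends uniquely to a $\rho\in\Sur(\widehat{\Pic(C_K)},A)$ with $\rho(\phi^*(\infty))=0$, and restriction is an inverse to this, so
$$
\Sur(\Pic^0(C_K),A)\longleftrightarrow\{\rho\in\Sur(\widehat{\Pic(C_K)},A)\mid \rho(\phi^*(\infty))=0\}.
$$
Combined with Proposition~\ref{P:whichL}, the right-hand side is the set of everywhere-unramified $A$-extensions $L/K$ on which $\sigma$ acts by inversion, equipped with an identification $\Gal(L/K)\isom A$. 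By Schur--Zassenhaus (orders $|A|$ and $2$ are coprime) such an $L/Q$ has $\Gal(L/Q)\isom H$, and analyzing inertia (trivial at places unramified in $K/Q$, and generated by an involution in $c$ at the ramified ones) exhibits these as $(H,c)$-extensions with $L^A\isom K$.

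I would then perform the fiber count. Realizing a marked extension $(L,\pi,m)$ as a concrete subfield of $\bar{Q}$ through the embedding determined by $m$, any isomorphism of marked extensions must fix that subfield pointwise; since $Z(H)=1$, marked extensions have no nontrivial automorphisms, and their isomorphism classes are given by a concrete $L\sub\bar{Q}$ together with an isomorphism $\pi\colon\Gal(L/Q)\isom H$. Over a fixed $L$ there are $|\Aut(H)|$ choices of $\pi$, which map onto the $|\Aut(A)|$ surjections $\rho_0$ arising from $L$; a given $\rho_0$ is hit precisely by the $\pi$ extending the fixed restriction $\pi|_{\Gal(L/K)}$, and there are exactly $|A|$ of these, as the image of one fixed involution lift of $\sigma$ can be any element of $c$. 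Since $|\Aut(H)|=|A|\,|\Aut(A)|$, the resulting map to $\Sur(\Pic^0(C_K),A)$ is $|A|$-to-$1$.

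For the pointed refinement I would compute $\rho_0(\delta_K)$. Writing $\rho_0=\rho|_{\Pic^0(C_K)}$ and using $\phi^*(\infty)=\infty_1+\infty_2$ with $\rho(\phi^*(\infty))=0$, we get $\Frob_{\infty_2}=\rho(\infty_2)=-\rho(\infty_1)=-\Frob_{\infty_1}$ (equivalently, $\sigma$ swaps $\infty_1,\infty_2$ and inverts $A$), so
$$
\rho_0(\delta_K)=\rho(\infty_1)-\rho(\infty_2)=\Frob_{\infty_1}-\Frob_{\infty_2}=2\Frob_{\infty_1}.
$$
Because the $|A|$ extensions $\pi$ lying over a fixed $\rho_0$ all share the same restriction to $A$, the element $2\Frob_{\infty_1}\in\Gal(L/Q)$ is constant on each fiber and equals $\rho_0(\delta_K)$; intersecting the $|A|$-to-$1$ map with the condition $\rho_0(\delta_K)=a$ then yields the displayed formula for $|\Sur((\Pic^0(C_K),\delta_K),(A,a))|$. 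The discriminant identity is the conductor--discriminant (tower) formula: $L/K$ is unramified away from $\infty$, so $\disc(L/Q)=\disc(K/Q)^{[L:K]}=\disc(K/Q)^{|A|}$, and taking norms gives $\Nm\Disc(L)=(\Nm\Disc(K))^{|A|}$. The hard part will be the careful bookkeeping at $\infty$: making precise how the marking $m$ and the associated infinity type interact with the everywhere-unramified object $\Sur(\Pic^0(C_K),A)$, so that the rigidification by $m$ exactly converts the factor $|\Aut(H)|$ into the factor $|A|$, and so that ``$2\Frob_{\infty_1}$'' is read off consistently (independent of the chosen place of $L$ above $\infty_1$, which holds because $A$ is abelian). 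Everything else is engineered to turn the identity $|\Aut(H)|=|A|\,|\Aut(A)|$ into the statement.
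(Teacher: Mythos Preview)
Your proposal is correct and follows essentially the same route as the paper: both arguments identify $\Sur(\Pic^0(C_K),A)$ with surjections from $\widehat{\Pic(C_K)}$ killing $\phi^*(\infty)$, invoke Proposition~\ref{P:whichL} to recognize the inversion condition, observe that the $|A|$ ways to extend a fixed isomorphism $\Gal(L/K)\isom A$ to $\Gal(L/Q)\isom H$ account for the $|A|$-to-$1$ count, and compute $\delta_K\mapsto 2\Frob_{\infty_1}$ via $\sigma(\Frob_{\infty_1})=\Frob_{\infty_2}=-\Frob_{\infty_1}$. One small slip: you split $\widehat{\Pic(C_K)}\isom\Pic^0(C_K)\times\widehat{\Z}$ ``via a degree-one class,'' but for a general quadratic $K/Q$ the gcd $d$ of degrees may be $2$; the paper avoids this by working with $\Pic(C_K)/\langle\phi^*(\infty)\rangle$ and the exact sequence $1\to\Pic^0(C_K)\to\Pic(C_K)/\langle\phi^*(\infty)\rangle\to\Z/2\Z$, though your argument is easily repaired by splitting along a degree-$d$ class instead.
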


\begin{proof}

We have a natural identification of 
$\Sur({\Pic^0(C_K)},A)$ and $$\Sur({\Pic(C_K)/\langle \phi^*(\infty) \rangle},A),$$ since we have
the exact sequence
$$
1\ra \Pic^0(C_K) \ra \Pic(C_K)/\langle \phi^*(\infty) \rangle \ra \Z/2\Z.
$$

 We will now construct the map.
 Note that in each isomorphism class of  marked $(H,c)$ extensions of $Q$, there is a distinguished element such that $L\sub \bar{Q}$ and $m|_{L}$ is the inclusion map.  
We start with a distinguished
$$
L \in \{\textrm{isom. classes of marked $(H,c)$-extns $L/Q$}| L^A\isom K,  2\Frob_{\infty}\mapsto \pm a \}.
$$
The $H$ structure gives an isomorphism $\Gal(L/K)\isom A$. 
Since $L/Q$ is an $(H,c)$-extension, this implies that $L/K$ is abelian and unramified, so we have $L\sub K^{un,ab}$.  
This gives us a surjection $\widehat{\Pic(C_K)}\ra A$.  We can see that the surjection sends $\phi^*(\infty)$ to $0$ by Proposition~\ref{P:whichL}, and also when $K\in SCQ$, we can trace the image of $\Frob_\infty$ by the argument above. 

We have that $\Gal(K^{un,ab}/K)\isom \widehat{\Pic(C_K)}.$   From Proposition~\ref{P:whichL}, we see that a surjection 
$\Pic(C_K)/\langle \phi^*(\infty)\rangle \ra A$ corresponds exactly to an $L$ with $K\sub L\sub K^{un,ab}$  and an isomorphism
$\Gal(L/K)\isom A$ such that $L/Q$ is Galois and $\Gal(K/Q)=\langle \sigma\rangle $ acts as inversion on $\Gal(L/K)$.
In particular when $K\in SCQ$, in $\Gal(L/Q)$, we have that $\delta_K=\Frob_{\infty_1}-\Frob_{\infty_2}=\Frob_{\infty_1}-\sigma(\Frob_{\infty_1})=2\Frob_{\infty_1}.$

Since $L/Q$ is Galois and
 $\sigma$  acts as $-1$ on $\Gal(L/K)$, we have $\Gal(L/Q)\isom A \rtimes_{-1} S_2$.
Moreover, there are $|A|$ choices of an isomorphism $\Gal(L/Q)\isom A \rtimes_{-1} S_2$
that restrict to the given choice of $\Gal(L/K)\isom A$, each determined by which element of $H\setminus A$ goes to $(1,\tau)$,
where $\tau$ is the generator of $S_2$. 
 The fact that $L/K$ is unramified implies that we obtain, for each of these $|A|$ choices, a marked $(H,c)$-extension, where the marking is by the inclusion
 into $\bar{Q}$.  


Since $L/K$ is unramified, we have the statement on their discriminants.

We can check that the constructions given above are inverse to each other, which completes the proof of the proposition.
\end{proof}

\subsection{Group theory definitions}\label{S:group}
In this section, we will give necessary group theorem definitions for using the results from \cite{Ellenberg2012}.
In this section, we will work with a finite group $H$. 

Given a finite group $H$ and a conjugacy class $c$  of $H$, we 
 will define the \emph{universal marked central extension} $\widetilde{H}$ of $H$ (with respect to $c$), following
\cite[Section 7]{Ellenberg2012}.  \emph{In this section, we suppose that if $[g]\in c$ and $d$
is relatively prime to the order of $g$, then $[g^d]\in c$. }(If this is not the case, more complicated definitions are required.) Let $\SC$ be a Schur cover of $H$ so we have an exact sequence
$$
1\ra H_2(H,\Z)\ra \SC \ra H \ra 1
$$
by the Schur covering map.
For $x,y\in H$ that commute, let $\hat{x}$ and $\hat{y}$ be arbitrary lifts to $\SC$, and let $\langle x,y\rangle$
be the commutator $[\hat{x},\hat{y}]\in \SC$, which actually lies in 
$ H_2(H,\Z)$ since $x$ and $y$ commute.  
  It we take the quotient of the above exact sequence by all $\langle x,y\rangle$ for $x\in c$ and $y$ commuting with $x$, we obtain an exact sequence 
\begin{equation}\label{E:defH}
1\ra H_2(H,c)\ra \widetilde{H}_c \ra H \ra 1,
\end{equation}
which is still a central extension, defining $H_2(H,c)$ and  $\widetilde{H}_c$ . 
Let $(H)^{ab}$ denote the abelianization of $H$.
 The universal marked central extension is $\widetilde{H}:=\widetilde{H}_c\times_{(H)^{ab}} \Z$ and the map 
 $\Z \ra (H)^{ab}$ sends
$1$ to an image of an element of $c$.  We have a map $\widetilde{H} \ra H$, given through projecting to the first factor.  (See \cite[Section 7]{Ellenberg2012} for why this is called a universal marked central extension.)

 Let $\hat{\Z}$ be the inverse limit 
$\varprojlim \Z/n\Z$ taken over $n$ relatively prime to $q$ (we follow the notation of \cite{Ellenberg2012} instead of the more customary $\hat{\Z}'$). 
We are now going to define an action of $\hat{\Z}^\times$ on $\widetilde{H}$, called the \emph{discrete action} \cite[Section 8.1.7, Equation 9.4.1]{Ellenberg2012}. 
There is an action of $\hat{\Z}^\times$ on $H$ given by powering.    
We pick one element $g\in c$ and one lift $\hat{g}\in \widetilde{H}_c$ of $g$.  Next we will extend this to a map $\hat{}: c \ra \widetilde{H}_c$ such that for all $g\in c$, we have $\hat{g}$ has image $g$ in $H$.  
We define $\widehat{h g h^{-1}}=\tilde{h} \hat{g} \tilde{h}^{-1}$ for any choice of lift $\tilde{h}\in\widetilde{H}_c $ of $h$,
and since $\widetilde{H}_c\ra H$ is central, the definition does not depend on our choice of lift.
For $\alpha\in\hat{\Z}^\times$
$$
z(\alpha)=\hat{g}^{-\alpha} \widehat{g^{\alpha}}.
$$
First, we note that $z(\alpha)$ is defined by a product in $\widetilde{H}_c$, but actually lies in 
$H_2(H,c)$ since its image in $H$ is trivial.  Second, one can work out that $z(\alpha)$ does not depend on the choice of $g\in c$ (see \cite[Section 3.1]{Wood2017a}).  

   The discrete action of $\hat{\Z}^\times$ on $\widetilde{H}$  is given by
$$
\alpha * (g,m  )=(g^{\alpha}z(\alpha)^m,m).
$$

\subsection{Properties of the Hurwitz scheme}

In this theorem, we recall the properties of the Hurwitz scheme constructed by Ellenberg, Venkatesh, and Westerland, building on work on Romagny and Wewers \cite{Romagny2006}, as well as results on its homological stability from \cite{Ellenberg2016} and components \cite{Ellenberg2012}.  An extension $L/\F_q(t)$ is \emph{regular} if it does not contain a non-trivial base field extension $\F_{q^r}(t)/\F_q(t).$

\begin{theorem}[Ellenberg, Venkatesh, and Westerland]\label{T:Chur}\footnote{The paper \cite{Ellenberg2012} has been temporarily withdrawn by the authors because of a gap which affects
Sections 6, 12 and some theorems of the introduction of \cite{Ellenberg2012}. That gap does not affect any of the
results from \cite{Ellenberg2012} that we use in this paper.}
Let $H$ be a finite group with trivial center and let $c$ be a conjugacy class of order $2$ elements of $H$,  such that the elements of $c$ generate $H$. 
 Let $\F_q$ be a finite field with $q$ relatively prime to $| H|$.
There is a Hurwitz scheme $\CCHur_{H,n}$ over $\Z[| H|^{-1}]$ constructed in \cite[Section 8.6.2]{Ellenberg2012} with the following properties:
 \begin{enumerate}

\item We have $\CCHur_{ H,n}$ is a finite \'etale cover of the relatively smooth $n$-dimensional configuration space $\Conf^n$ of $n$ distinct unlabeled points in $\mathbb{A}^1$ over $\Spec \Z[| H|^{-1}]$.
\label{i:smooth} 
 
\item There is an action of $H$ on $\CCHur_{ H,n}$. 
 
 \item  \label{i:bij} 
 The scheme $\CCHur_{ H,n}$ has an open and closed subscheme $\CCHur^{c,1}_{ H,n}$ such that for $h\in H$ there is a bijection between
 \begin{enumerate}
 \item isomorphism classes of  regular marked $(H,c)$-extensions $M$ of $\F_q(t)$ 
 with unramified infinity type $\phi$ such that $\phi(F_\Delta)=h$ and
 such that the total degree of ramified non-infinite places of $\F_q(t)$ is $n$
(where $F_\Delta$ is a lift of Frobenius to $\Gal(\overline{\F_q(t)_{\v}}/\F_q(t)_{\v})$ that acts trivially on $\F_q((t^{-1/\infty}))$). 
 \item points of $s\in\CCHur^{c,1}_{ H,n}(\bar{\F_q})$ such that $h^{-1}\Frob(s)=s$ 
  \cite[Section 10.4]{Ellenberg2012}.
 \end{enumerate}
 
 \item We have $\CCHur_{H,n}(\C)$ is homotopy equivalent to a topological space $\CHur_{H,n}$ \cite[Section 8.6.2]{Ellenberg2012}, 
such that 
for any field $k$ of characteristic relatively prime to $|\GG|$,
there is a constant $C$ such that for all $i\geq 1$ and for all $n$ we have $\dim H^i(\CHur_{H,n},k)\leq C^i$ \cite[Proposition 2.5 and Theorem 6.1]{Ellenberg2016}. \label{i:topbounds}

\item Given $H$, for all $n$ sufficiently large and all $q$ with $(q,|H|)=1$, for $h\in H$ the $h^{-1}\Frob$  fixed components of $\CCHur^{c,1}_{H,n}  \tensor_{\Z[|\GG|^{-1}]} {\bar{\F}_q}$ are in bijection with elements
$(x,n) \in \widetilde{H} $ such that $q^{-1}*(x,n)=\hat{h}^{-1} (x,n) \hat{h}$ (where $\hat{h}$ is any lift of $h$ to $\widetilde{H}$) and $x$ has trivial image in $H$  \cite[Theorem 8.7.3]{Ellenberg2012} (see Section~\ref{S:group} for definitions). 
\label{i:comp}

 \end{enumerate}
\end{theorem}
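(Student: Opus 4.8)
The plan is to assemble the five assertions directly from the cited works of Ellenberg, Venkatesh, and Westerland, verifying in each case that the standing hypotheses here---that $H$ is center-free, that $c$ consists of involutions generating $H$, and that $(q,|H|)=1$---are exactly those under which the cited results are proven, and translating notation where the two papers differ.

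First, for (1) and (2) I would cite the construction of \cite[Section 8.6.2]{Ellenberg2012} essentially verbatim: $\CCHur_{H,n}$ is built as a moduli space of covers with $n$ branch points and monodromy in $c$, and is by construction a finite \'etale cover of $\Conf^n$ over $\Z[|H|^{-1}]$ carrying a residual $H$-action from the basepoint marking. For (3), the bijection with $h^{-1}\Frob$-fixed geometric points is the arithmetic translation of \cite[Section 10.4]{Ellenberg2012}: a $\bar{\F}_q$-point is a cover of $\mathbb{A}^1_{\bar{\F}_q}$, the fixed-point condition descends it to $\F_q$ and pins down the infinity type through $F_\Delta$, and the decoration $\CCHur^{c,1}$ records a single geometrically connected cover, i.e.\ a regular extension. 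The work here is bookkeeping: matching the marking and infinity type of Section~\ref{S:FFnot} with the value $\phi(F_\Delta)=h$, and identifying the total degree of ramified finite places with the $n$ of $\Conf^n$.

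For (4) I would combine two inputs. The homotopy equivalence $\CCHur_{H,n}(\C)\simeq\CHur_{H,n}$ is again from \cite[Section 8.6.2]{Ellenberg2012}. The quantitative, $n$-uniform bound $\dim H^i(\CHur_{H,n},k)\leq C^i$ is the genuinely new input: Proposition 2.5 of \cite{Ellenberg2016} bounds the cohomology of $\CHur_{H,n}$ in terms of a finitely generated bar-type complex, and Theorem 6.1 there controls the growth of that complex, together producing a single constant $C$ depending only on $(H,c)$ and the characteristic of $k$ and valid for all $i\geq 1$ and all $n$ at once. Finally, for (5) the identification of the $h^{-1}\Frob$-fixed components with the elements $(x,n)\in\widetilde{H}$ of trivial image in $H$ satisfying $q^{-1}*(x,n)=\hat{h}^{-1}(x,n)\hat{h}$ is \cite[Theorem 8.7.3]{Ellenberg2012}; here I would recall the universal marked central extension $\widetilde{H}$ and the discrete $\hat{\Z}^\times$-action from Section~\ref{S:group} and note that geometric Frobenius acts through $q\in\hat{\Z}^\times$, so the fixed-component condition becomes the stated equation.

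The main obstacle I anticipate is not any single new argument but faithful transcription and compatibility of conventions across the two papers: (a) extracting the exponential bound $C^i$ of (4) in exactly the uniform form that the later Grothendieck--Lefschetz estimate will require, and (b) ensuring the marking and infinity-type conventions of (3) are compatible with the discrete action of (5), since the standing assumption that $c$ be closed under prime-to-order powers is precisely what makes $\widetilde{H}$ and its $\hat{\Z}^\times$-action well-defined.
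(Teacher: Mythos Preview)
Your approach is exactly what the paper does: this theorem is not proved in the paper but is a compendium of results from \cite{Ellenberg2012} and \cite{Ellenberg2016}, and the paper simply states it with a remark doing the same translation work you outline.

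Two small points of transcription to fix. First, the decoration $\CCHur^{c,1}_{H,n}$ does not record geometric connectedness; as the paper's remark explains, it is the open and closed locus parametrizing covers whose local inertia lands in $c\cup\{1\}$ and which are unramified at $\infty$ (the ``$1$'' is the trivial inertia at infinity, not connectedness), with the argument for open-and-closedness taken from \cite[Section 7.3]{Ellenberg2016}. Regularity of the extensions in (3a) is a separate hypothesis, not encoded in the superscript. Second, in (5) the paper's remark makes explicit that under the chosen identification the action of $\Frob$ on components corresponds to the discrete action of $q^{-1}$ (not $q$) on $\widetilde{H}$: the components as described in \cite[Theorem 8.7.3]{Ellenberg2012} are $\hat{\Z}^\times$-equivariant functions from topological generators of $\varprojlim \mu_n$ to the preimage of $1$ in $\widetilde{H}$, and choosing a generator converts this to an element of $\widetilde{H}$ on which Frobenius acts via $q^{-1}*$. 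With those two corrections your plan matches the paper's treatment.
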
 

\begin{remark}
The scheme $\CCHur^{c,1}_{H,n}$ comes from restricting to the parametrization of covers of $\P^1$  all of whose local inertia groups have image 
 in $c \cup \{1\}$ and that are unramified at $\infty$.  The argument that $\CCHur^{c,1}_{H,n}$ is an open and closed subscheme is as in \cite[Section 7.3]{Ellenberg2016}. 
Our description of the components requires a bit of translation from that in \cite[Theorem 8.7.3]{Ellenberg2012}.  They biject the components with $\hat{\Z}^\times$ equivariant functions from topological generators of 
$\varprojlim \mu_n$ (taken over $n$ relatively prime to $q$) to the preimage of $1$ in $\widetilde{H}$ that are fixed by the action of $h^{-1} \Frob$ on $\varprojlim \mu_n$.
  By choosing any topological generator of $\varprojlim \mu_n$, its image under a function to $\widetilde{H}$ gives us a corresponding element of $\widetilde{H}$, and the action of $\Frob$ corresponds to the above discrete action of $q^{-1}$ on $\widetilde{H}$.
\end{remark}

\subsection{Proof of Theorem~\ref{T:RSC}} 
We continue the notation from Section~\ref{S:FFnot}.
Let $H:=A\rtimes_{-1} S_2$ and $Q=\F_q(t)$.
In Proposition~\ref{P:countsur}, given $K$, we have
\begin{align*}
&\#\{\textrm{isom. classes of marked $(H,c)$-extns $L/Q$}|  2\Frob_{\infty_1}\mapsto  a, L^A\isom K \}\\
=&\#\{\textrm{isom. classes of marked $(H,c)$-extns $L/Q$}|  2\Frob_{\infty_2}\mapsto  a, L^A\isom K \}.
\end{align*}
The $F_\Delta$ in Theorem~\ref{T:Chur} is a Frobenius element in $\Gal(L/Q)$ and since $K/Q$ is split completely, we 
have $F_\Delta=\Frob_{\infty_1}$ or $\Frob_{\infty_2}$. 
So we can sum Proposition~\ref{P:countsur} over $K\in SCQ$ to obtain
\begin{align*}
&\sum_{K\in SCQ, \Nm \Disc(K/Q)=q^{2m}} |\Sur(({\Pic^0(C_K)},\delta_K),(A,a))|\\
=&\frac{\#\{\textrm{isom. classes of marked $(H,c)$-extns $L/Q$}|  L^A \in SCQ , 2F_\Delta\mapsto  a\}}{|A|}.
\end{align*}

We will now see that all of the $L$ that appear above are regular.  If $L/Q$ contains an extension of $\F_q(t)$, then it corresponds to some cyclic quotient of $H$.  However, the abelianization of $H$ is $S_2$, so this can only happen when $L^A$ is $\F_{q^2}(t)$. However,
$\F_{q^2}(t)$ is not split completely over $\infty$. 

By Theorem~\ref{T:Chur} \eqref{i:bij} with $h=a/2$ and $n=2m$, we then have
\begin{align*}
\sum_{K\in SCQ, \Nm \Disc(K/Q)=q^{2m}} |\Sur(({\Pic^0(C_K)},\delta_K),(A,a))|
=\frac{\#\{
s\in\CCHur^{c,1}_{ H,2m}(\bar{\F}_q)\,|\,h^{-1}\Frob(s)=s
\}}{|A|}.
\end{align*}
Let $X:= \CCHur^{c,1}_{ H,2m} \otimes_{\Z[|H|^{-1}]} \F_q$.  If $h=1$, we would need to then count $\F_q$ points of $X$.  However, for general $h$, we need to count $\F_q$ points of a different variety $Y$ over $\F_q$ such that $Y\tensor_{\F_q} \bar{\F}_q\isom 
X\tensor_{\F_q} \bar{\F}_q$.  We can descend $X':=X\tensor_{\F_q} \bar{\F}_q$ to $Y$ over $\F_q$ using the action of
$\Gal(\bar{\F}_q/\F_q)$ on $X'$ in which the Frobenius in the Galois group acts as $h^{-1}\Frob$ on $X'$, where $\Frob$ is the action of Frobenius using $X$ as the $\F_q$ structure of $X'$ (see, e.g. \cite[Corollary 16.25]{Milne}). So the Frobenius action on $Y(\bar{\F}_q)$ corresponds to the  action of $h^{-1}\Frob$ on $X(\bar{\F}_q).$
Thus, we have
\begin{align*}
\sum_{K\in SCQ, \Nm Disc(K/Q)=q^{2m}} |\Sur(({\Pic^0(C_K)},\delta_K),(A,a))|
=\frac{\#\
Y (\F_q)}{|A|}.
\end{align*}

We will apply the Grothendieck-Lefschetz trace formula to $X'\isom Y\tesnor_{\F_q} \bar{\F}_q$.
 By Theorem~\ref{T:Chur} \eqref{i:smooth}, we have that $X'$ is smooth of dimension $2m$.
 We have that $\dim H^i_{\textrm{c,\'et}}(X',\Q_\ell)=
 \dim H^{4m-i}_{\textrm{\'et}}(X',\Q_\ell)$ by Poincar\'{e} Duality.
 
Next, we will relate $\dim H^{j}_{\textrm{\'et}}(X',\Q_\ell)$ to 
 $\dim H^{j}( \CCHur^{c,1}_{H,2m}(\C),\Q_\ell)$ for some $\ell>2m$.  To compare \'{e}tale cohomology in characteristic $0$ and positive characteristic, we will use \cite[Proposition 7.7]{Ellenberg2016}.  The result \cite[Proposition 7.7]{Ellenberg2016} gives an isomorphism of \'{e}tale cohomology between characteristic $0$ and positive characteristic in the case of a finite cover of a complement of a reduced normal crossing divisor in a smooth proper scheme.  Though \cite[Proposition 7.7]{Ellenberg2016} is only stated	 for \'{e}tale cohomology with coefficients in $\Z/\ell\Z$, the argument goes through identically for coefficients in $\Z/\ell^k\Z$, and then we can take the inverse limit and tensor with $\Q_{\ell}$ to obtain the result of \cite[Proposition 7.7]{Ellenberg2016} with $\Z/\ell\Z$ coefficients replaced by $\Q_\ell$ coefficients.  So we apply this strengthened version to conclude that 
$ \dim H^{j}_{\textrm{\'et}}(X',\Q_\ell)=\dim H^{j}_{\textrm{\'et}}(( \CCHur^{c,1}_{H,2m})_{\C}),\Q_\ell)$.
(As in \cite[Proof of Proposition 7.8]{Ellenberg2016}, we apply comparison to $ \CCHur^{c,1}_{H,2m} \times_{\Conf^{2m} } \operatorname{PConf}_{2m},$ where $\operatorname{PConf}_{2m}$ is the moduli space of $2m$ labelled points on $\mathbb{A}^1$, and is the complement of a relative normal crossings divisor in a smooth proper scheme \cite[Lemma 7.6]{Ellenberg2016}.
Then we take $S_{2m}$ invariants to compare the \'etale cohomology of $\CCHur^{c,1}_{H,2m}$ across characteristics.)
By the comparison of \'{e}tale and analytic cohomology \cite[Expos\'e XI, Theorem 4.4]{1973} $\dim H^{j}( \CCHur^{c,1}_{H,2m}(\C),\Q_\ell)=\dim H^{j}_{\textrm{\'et}}(( \CCHur^{c,1}_{H,2m})_{\C}),\Q_\ell)$.

By Theorem~\ref{T:Chur}~\eqref{i:topbounds}, there is a constant $C$ such that for all $j\geq 1$ and for all $m$, we have $\dim H^{j}(\CCHur^{c,1}_{H,2m}(\C),\Q_\ell)\leq C^j$.  Thus $ \dim H^{j}_{\textrm{\'et}}(X',\Q_\ell)\leq C^j$ for all $j\geq 1$.
  Thus using Poincar\'e duality, 
$ \dim H^{i}_{c,\textrm{\'et}}(X',\Q_\ell)\leq C^{4m-i}$ for all $i<4m$.

We will apply Grothendieck-Lefschetz for the Frobenius map from $Y$ on $X'$, which is $h^{-1}\Frob$ (where $\Frob$ is the Frobenius map from $X$).
By Theorem~\ref{T:Chur} \eqref{i:comp}, we have that the number of components of $X'$ fixed by $h^{-1}\Frob$ for  $2m\geq n_H$ for some fixed $n_H$ is equal to the number of $(x,2m)\in \tilde{H}$ with $x\in H_2(H,c)$ and
$q^{-1}*(x,2m)=\hat{h}^{-1}(x,2m)\hat{h}$.  Since $x$ is central in $\tilde{H}_c$, this is the same as the number of 
$(x,2m)\in \tilde{H}$ with $x\in H_2(H,c)$ and
$q^{-1}* (x,2m)=(x,2m),$ which is $\#H_2(H,c)[q-1]$ by \cite[Proposition 3.1]{Wood2017a}. 
By \cite[Example 9.3.2]{Ellenberg2012}, we have that $\#H_2(H,c)$ is a quotient of $H_2(A,\Z)$ so if $(q-1,|A|)=1$ then 
$\#H_2(H,c)[q-1]=1.$ 

 So we have
$$
\#Y({\F}_q) =\sum_{j\geq 0} (-1)^j\Tr(h^{-1}\Frob|_{ H^j_{\textrm{c,\'et}}(X',\Q_\ell)})
$$
and 
also we know $\Tr(h^{-1}\Frob|_{ H^{4m}_{\textrm{c,\'et}}((X',\Q_\ell)})$ is $q^{2m}$ times the number of components of $X'$ fixed by $h^{-1}\Frob$.
Since $X$ is smooth, we have that the absolute value of any eigenvalue of $\Frob$ on $H^j_{\textrm{c,\'et}}(X',\Q_\ell)$ is at most
$q^{j/2}$ in absolute value, and since $h^{-1}$ is finite order and commutes with $\Frob$ 
 the same is true for eigenvalues of $h^{-1}\Frob$.
Thus, for  $2m\geq n_A$,
\begin{align*}
|\#Y({\F}_q)  - q^{2m}|&=\left| \sum_{0\leq j< 2\dim X} (-1)^j\Tr(\Frob|_{ H^j_{\textrm{c,\'et}}(X_{\bar{\F}_q},\Q_\ell)}) \right| \\
&\leq \sum_{0\leq j< 2\dim X} q^{j/2} C^{4m -j}\\
&\leq q^{2m} \sum_{1\leq i } (\sqrt{q}/C)^{-i} .
\end{align*}

Thus, for fixed $q>C^2$,
\begin{align*}
&\limsup_{m\ra\infty} \frac{ \sum_{K\in SCQ, \Nm \Disc(K/Q)=q^{2m}} |\Sur(({\Pic^0(C_K)},\delta_K),(A,a))|}{\sum_{K\in SCQ, \Nm \Disc(K/Q)=q^{2m}} 1}\\
=
&\limsup_{m\ra\infty} \frac{ \sum_{K\in SCQ, \Nm \Disc(K/Q)=q^{2m}} |\Sur(({\Pic^0(C_K)},\delta_K),(A,a))|}{q^{2m}-q^{2m-1}}
\\
=
&\limsup_{m\ra\infty} \frac{q^{2m}+O(\frac{q^{2m}}{\sqrt{q}/C-1}  )  }{(q^{2m}-q^{2m-1})|A|}\\
=
&\limsup_{m\ra\infty} \frac{1+O(\frac{1}{\sqrt{q}/C-1}  )  }{(1-q^{-1})|A|}.
\end{align*}
The implied constant in the big $O$ notation is $1$.
A similar argument works for the $\liminf$ and the theorem follows.

\subsection{Corollaries}

Note that by adding Theorem~\ref{T:RSC} over all elements $a\in A$, we have the following corollary, which gives  evidence that among real quadratic $K/\F_q(t)$, the Sylow $p$-subgroups $\Pic^0(C_K)_p$ are distributed according the measure $\mu^0$ defined in Section~\ref{S:meas}.
\begin{corollary}\label{C:Picmom}
 For a finite odd order abelian group $A$, let
 $$
\delta^+_q:=\limsup_{m \ra\infty}  \frac{  \sum_{K/\F_q(t) \textrm{ real quad,} \Nm \Disc(K)=q^{2m}} |\Sur(\Pic^0(C_K),A)|}{\sum_{K/\F_q(t) \textrm{ real quad,} \Nm \Disc(K)=q^{2m}}1}
$$
and $\delta^-_q$ the corresponding $\liminf$.
Then as $q\ra\infty$ among odd prime powers such that $(q(q-1),|A|)=1$, we have
$$
\delta^+_q,\delta^-_q\ra 1.
$$
\end{corollary}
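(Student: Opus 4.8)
The plan is to deduce Corollary~\ref{C:Picmom} from Theorem~\ref{T:RSC} formally, by summing over the pointing element. The key observation is that any surjection $\phi\colon\Pic^0(C_K)\ra A$ sends the class $\delta_K=\infty_1-\infty_2$ to exactly one element of $A$, so grouping surjections according to the value $\phi(\delta_K)$ partitions $\Sur(\Pic^0(C_K),A)$ and gives, for each $K$,
$$
|\Sur(\Pic^0(C_K),A)|=\sum_{a\in A}|\Sur((\Pic^0(C_K),\delta_K),(A,a))|.
$$
The sign ambiguity in $\delta_K$ only permutes these terms via $a\mapsto -a$, so the total is well defined. Dividing by $\sum_{K}1$ over the family of real quadratic $K/\F_q(t)$ with $\Nm\Disc(K)=q^{2m}$, I find that the non-pointed ratio in Corollary~\ref{C:Picmom} is, at each level $m$, the finite sum over $a\in A$ of the pointed ratios studied in Theorem~\ref{T:RSC}.

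Writing $f_a(m)$ for the $a$-th pointed ratio and $\delta_q^\pm(a)$ for its $\limsup$ and $\liminf$ in $m$ (the quantities controlled by Theorem~\ref{T:RSC}), I would next take limits. For finitely many nonnegative sequences one has $\limsup_m\sum_a f_a(m)\leq\sum_a\limsup_m f_a(m)$ and $\liminf_m\sum_a f_a(m)\geq\sum_a\liminf_m f_a(m)$, so the non-pointed quantities $\delta_q^\pm$ of Corollary~\ref{C:Picmom} satisfy
$$
\sum_{a\in A}\delta_q^-(a)\ \leq\ \delta_q^-\ \leq\ \delta_q^+\ \leq\ \sum_{a\in A}\delta_q^+(a).
$$
Letting $q\ra\infty$ among odd prime powers with $(q(q-1),|A|)=1$, Theorem~\ref{T:RSC} gives $\delta_q^\pm(a)\ra|A|^{-1}$ for each of the finitely many $a$, so both bounding sums converge to $|A|\cdot|A|^{-1}=1$, and the squeeze forces $\delta_q^\pm\ra 1$.

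The only delicate point—and the reason the argument uses inequalities rather than an equality of limsups—is that the $\limsup$ and $\liminf$ in $m$ need not commute with the sum over $a$; in general only the one-sided bounds above are available. This is harmless here precisely because $A$ is finite and fixed, so both the upper bound (a sum of $\limsup$s) and the lower bound (a sum of $\liminf$s) tend to the same limit $1$ as $q\ra\infty$, leaving no gap for the squeeze to close.
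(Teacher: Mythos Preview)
Your proof is correct and is exactly the approach the paper takes: the paper simply states that Corollary~\ref{C:Picmom} follows ``by adding Theorem~\ref{T:RSC} over all elements $a\in A$,'' and you have supplied the details of that addition, including the careful treatment of the sign ambiguity in $\delta_K$ and the $\limsup/\liminf$ squeeze over the finite sum.
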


By restricting Theorem~\ref{T:RSC} to the case $a=0$ and using the fact that
$\Cl(\O_K)\isom\Pic^0(C_K)/\langle\infty_1-\infty_2 \rangle,$ we obtain the following corollary, which gives  evidence that among real quadratic $K/\F_q(t)$, the groups $\Cl(\O_K)_p$ are distributed according the measure $\mu^1$ defined in Section~\ref{S:meas}.
\begin{corollary}\label{C:regmom}
 For a finite odd order abelian group $A$, let
 $$
\delta^+_q:=\limsup_{m \ra\infty}  \frac{  \sum_{K/\F_q(t) \textrm{ real quad,} \Nm \Disc(K)=q^{2m}} |\Sur(\Cl(\O_K),A)|}{\sum_{K/\F_q(t) \textrm{ real quad,} \Nm \Disc(K)=q^{2m}}1}
$$
and $\delta^-_q$ the corresponding $\liminf$.
Then as $q\ra\infty$ among odd prime powers such that $(q(q-1),|A|)=1$, we have
$$
\delta^+_q,\delta^-_q\ra \frac{1}{|A|}.
$$
\end{corollary}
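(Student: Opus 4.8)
The plan is to deduce this directly from Theorem~\ref{T:RSC} in the case $a=0$, exploiting the isomorphism $\Cl(\O_K)\isom\Pic^0(C_K)/\langle\infty_1-\infty_2\rangle=\Pic^0(C_K)/\langle\delta_K\rangle$. The key observation is a termwise (in $K$) identity of counts. First I would note that a homomorphism $\Pic^0(C_K)\ra A$ factors through the quotient by $\langle\delta_K\rangle$ precisely when it sends $\delta_K\mapsto 0$, and that composition with the quotient map $\Pic^0(C_K)\ra\Cl(\O_K)$ carries surjections to surjections and is injective; hence pullback along this quotient map gives a bijection
$$
\Sur(\Cl(\O_K),A)\;\isom\;\Sur((\Pic^0(C_K),\delta_K),(A,0)),
$$
so that $|\Sur(\Cl(\O_K),A)|=|\Sur((\Pic^0(C_K),\delta_K),(A,0))|$ for every such $K$.

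Having established this identity, I would substitute it into the numerators defining $\delta^\pm_q$ in the present corollary. This replaces $\sum_K|\Sur(\Cl(\O_K),A)|$ by $\sum_K|\Sur((\Pic^0(C_K),\delta_K),(A,0))|$, with the sum ranging over real quadratic $K/\F_q(t)$ with $\Nm\Disc(K)=q^{2m}$, while the denominators are untouched. Thus the $\limsup$ and $\liminf$ defining $\delta^\pm_q$ here coincide with those in Theorem~\ref{T:RSC} specialized to $a=0\in A$. Since the hypotheses match exactly ($A$ a finite odd order abelian group, $q$ ranging over odd prime powers with $(q(q-1),|A|)=1$), Theorem~\ref{T:RSC} immediately yields $\delta^\pm_q\ra|A|^{-1}$.

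There is essentially no genuine obstacle beyond this bookkeeping: all of the analytic content (the Hurwitz scheme count and the Grothendieck--Lefschetz estimate) is already contained in Theorem~\ref{T:RSC}. The only point requiring any care is that $\delta_K$ is defined only up to sign, but this is harmless, since the subgroup $\langle\delta_K\rangle$ is independent of the chosen sign and the condition of sending $\delta_K$ (equivalently $-\delta_K$) to $0\in A$ is likewise unambiguous; I would remark on this explicitly so that the bijection above is well-defined.
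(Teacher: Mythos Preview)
Your proposal is correct and matches the paper's approach exactly: the paper states the corollary as following from Theorem~\ref{T:RSC} restricted to the case $a=0$ together with the isomorphism $\Cl(\O_K)\isom\Pic^0(C_K)/\langle\infty_1-\infty_2\rangle$, and you have simply spelled out the bijection $\Sur(\Cl(\O_K),A)\isom\Sur((\Pic^0(C_K),\delta_K),(A,0))$ that makes this work.
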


Note that it is not clear  whether the groups $\Cl(\O_K)_p$ (or $\Pic^0(C_K)_p$) are ``distributed according to a measure,'' i.e. whether there is a measure $\nu$ such that for all non-negative functions $f$, we have 
$$
\lim_{m \ra\infty}  \frac{  \sum_{K\in SCQ, \Nm \Disc(K/Q)=q^{2m}} f(\Cl(\O_K)_p)}{\sum_{K\in SCQ, \Nm \Disc(K/Q)=q^{2m}} 1} =\sum_{H \textrm{ fin ab $p$-group}} f(H)\nu(H).
$$
So even if we knew $\delta_q^\pm=1$, 
we could not use Proposition~\ref{P:momdet} to conclude that the averages of an arbitrary $f$ are predicted as by the Cohen-Lenstra heuristics.   Informally, the moments determine the measure, but we don't know whether the class groups are distributed according to a measure!
(Though for $f$ an indicator function, see  \cite[Corollary 8.2]{Ellenberg2016}.)

\section{Inert quadratic function fields}\label{S:iproof}

We continue with the notation given in Section~\ref{S:FFnot}.  
Let $INQ$ be the set
of quadratic extensions of $Q$ inert at $\infty$.  
We then have the following theorem giving  evidence that over $K\in INQ$,
the Sylow $p$-subgroups $\Pic^0(C_K)_p$ are distributed according to $\mu^0$.  
\begin{theorem}\label{T:in}

 For a finite odd order abelian group $A$, let
 $$
\delta^+_q:=\limsup_{m \ra\infty}  \frac{  \sum_{K\in INQ \Nm \Disc(K)=q^{2m}} |\Sur(\Pic^0(C_K),A)|}{\sum_{K\in INQ \Nm \Disc(K)=q^{2m}}1}
$$
and $\delta^-_q$ the corresponding $\liminf$.
Then as $q\ra\infty$ among odd prime powers such that $(q(q-1),|A|)=1$, we have
$$
\delta^+_q,\delta^-_q\ra 1.
$$

\end{theorem}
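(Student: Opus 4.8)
The plan is to run the argument of Theorem~\ref{T:RSC} essentially verbatim, replacing the split-at-$\infty$ condition by the inert one and working directly with non-pointed moments as in Corollary~\ref{C:Picmom}. As before I set $H:=A\rtimes_{-1}S_2$ with $S_2=\langle\tau\rangle$ acting by inversion, and let $c$ be the conjugacy class of order-$2$ elements. Since $A$ has odd order, a direct computation gives $(a,\tau)^2=1$ for every $a\in A$ and shows that conjugation by $(b,1)$ sends $(a,\tau)\mapsto(a+2b,\tau)$; as $2$ is invertible on $A$, this identifies $c$ with the full nontrivial coset $\{(a,\tau):a\in A\}$, so $|c|=|A|$ and all its elements are conjugate. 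First I would record the inert analog of Proposition~\ref{P:countsur}: using Proposition~\ref{P:whichL} (which is insensitive to the behavior at $\infty$), non-pointed surjections $\Pic^0(C_K)\to A$ for $K\in INQ$ correspond $|A|$-to-$1$ to marked $(H,c)$-extensions $L/Q$ with $L^A\isom K$, and $\Nm\Disc(L)=\Nm\Disc(K)^{|A|}$.

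The one genuinely new bookkeeping point is the infinity type. Because $\infty$ is inert in $K=L^A$, the decomposition group at $\infty$ surjects onto $\Gal(K/Q)=S_2$, so the (unramified) infinity type $\phi$ of $L$ has $\phi(F_\Delta)$ in the nontrivial coset, i.e.\ $\phi(F_\Delta)\in c$; this is the inert replacement for the condition $\phi(F_\Delta)\in A$ of the split case. Running over the $|A|$ choices of the isomorphism $\Gal(L/Q)\isom H$ realizes each element of $c$ as $\phi(F_\Delta)$ exactly once, so summing over $K\in INQ$ with $\Nm\Disc(K)=q^{2m}$ I expect, for $L$ ranging over marked $(H,c)$-extensions,
\[
\sum_{K\in INQ,\ \Nm\Disc(K)=q^{2m}}|\Sur(\Pic^0(C_K),A)|=\frac{1}{|A|}\sum_{h\in c}\#\{L\mid \phi(F_\Delta)=h,\ n=2m\}.
\]
Here I must re-examine regularity, which is where the inert case differs from the split case: a non-regular $L$ contains a constant extension, hence the unique degree-$2$ constant field extension $\F_{q^2}(t)$, and since $H^{ab}=S_2$ this forces $L^A=\F_{q^2}(t)$. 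Unlike in Theorem~\ref{T:RSC}, this field is itself inert at $\infty$, but it is everywhere unramified and so has $\Nm\Disc=1$; thus it is excluded once $m\geq1$, and all contributing $L$ are again regular.

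With this in hand the analytic machinery of Theorem~\ref{T:RSC} applies without change. For each $h\in c$, Theorem~\ref{T:Chur}\eqref{i:bij} identifies the extensions above with the $h^{-1}\Frob$-fixed $\bar\F_q$-points of $\CCHur^{c,1}_{H,2m}$, which equal $\#Y_h(\F_q)$ for the twist $Y_h$; Poincaré duality, the étale--analytic comparison, the exponential bound of Theorem~\ref{T:Chur}\eqref{i:topbounds}, and Grothendieck--Lefschetz then give $\#Y_h(\F_q)=q^{2m}N_h$ up to an error of size $q^{2m}\cdot O((\sqrt q/C-1)^{-1})$, where $N_h$ is the number of $h^{-1}\Frob$-fixed components. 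The component count (Theorem~\ref{T:Chur}\eqref{i:comp}) is unchanged from the split case: the relevant elements $(x,2m)\in\widetilde{H}$ have $x$ central in $\widetilde{H}_c$, so conjugation by a lift $\hat h$ fixes both $x$ and the $\Z$-coordinate, and the condition collapses to $q^{-1}*(x,2m)=(x,2m)$, whose solution count is $\#H_2(H,c)[q-1]$. Since $H_2(H,c)$ is a quotient of $\wedge^2 A$ and $(q-1,|A|)=1$, we get $N_h=1$ for every $h\in c$; note that the inversion action of $h\in c$ on $A$ is harmless precisely because it acts trivially on $\wedge^2 A$.

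Assembling, the numerator has main term $\frac{1}{|A|}\sum_{h\in c}q^{2m}=q^{2m}$, since $|c|=|A|$. For the denominator I would compute $\#\{K\in INQ\mid\Nm\Disc(K)=q^{2m}\}$ by specializing the same correspondence to $A=1$ (so $H=S_2$ and $H_2(S_2,\Z)=0$, giving $N=1$), or equivalently by a direct count of squarefree degree-$2m$ polynomials with non-square leading coefficient; either way the count is $q^{2m}-q^{2m-1}$. Dividing, $\delta_q^{\pm}$ equals $\tfrac{1+O((\sqrt q/C-1)^{-1})}{1-q^{-1}}$, which tends to $1$ as $q\to\infty$. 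I expect the only real obstacles to be the two places where the inert case deviates from Theorem~\ref{T:RSC}: the infinity-type and coset bookkeeping of the second paragraph together with the attendant regularity check, and pinning down the denominator asymptotics; the homological-stability input and the group theory of $\widetilde{H}$ transfer verbatim.
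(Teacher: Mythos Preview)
Your proposal is correct and follows the paper's approach essentially verbatim; the only cosmetic difference is that you exclude the non-regular contribution via $\Nm\Disc(\F_{q^2}(t))=1$, whereas the paper instead observes that $\Pic^0(C_{\F_{q^2}(t)})$ is trivial (so it contributes nothing unless $A$ is trivial, in which case the statement is immediate). One small caution: your suggestion to compute the denominator by specializing the Hurwitz machinery to $A=1$ would violate the trivial-center hypothesis of Theorem~\ref{T:Chur} (since $H=S_2$ is abelian), but your alternative direct squarefree-polynomial count is fine and matches the paper's $q^{2m}-q^{2m-1}$.
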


The proof below is analogous to, but easier than, the proof of Theorem~\ref{T:RSC}.

\begin{proof}
Let $H:=A\rtimes_{-1} S_2$ and $Q=\F_q(t)$.
We sum Proposition~\ref{P:countsur} over $K\in INQ$ to obtain
\begin{align*}
&\sum_{K\in INQ, \Nm \Disc(K/Q)=q^{2m}} |\Sur(\Pic^0(C_K),A)|\\
=&\#\{\textrm{isom. classes of marked $(H,c)$-extns $L/Q$}|   L^A \in INQ \}.
\end{align*}
Note that $L^A\in INQ$ if and only if in the marked extension $\pi(F_\Delta)\in H\setminus A$ and $\pi$ is unramified at $\infty$.

Almost all of the $L$ that appear above are regular.  If $L/Q$ contains an extension of $\F_q(t)$, then it corresponds to some cyclic quotient of $H$.  However, the abelianization of $H$ is $S_2$, so this can only happen when $L^A=K$ is $\F_{q^2}(t)$. Since $\Pic^0(C_{\F_{q^2}(t)})$ is trivial, it does not contribute to the sum unless $A$ is trivial (in which case the theorem is immediate).

Thus, when $A$ is non-trivial, by Theorem~\ref{T:Chur} \eqref{i:bij}, we then have
\begin{align*}
\sum_{K\in INQ, \Nm \Disc(K/Q)=q^{2m}} |\Sur(\Pic^0(C_K),A)|
=\sum_{h\in H\setminus A}
\frac{\#\{
s\in\CCHur^{c,1}_{ H,n}(\bar{\F}_q)\,|\,h^{-1}\Frob(s)=s
\}}{|A|}.
\end{align*}
We let $n=2m$.
Let $X':=X\tensor_{\F_q}\bar{\F}_q$.
As in the proof of Theorem~\ref{T:RSC}, we construct $Y_h$ so that 
$$
\#\{
s\in\CCHur^{c,1}_{ H,n}(\bar{\F}_q)\,|\,h^{-1}\Frob(s)=s
\}=\#Y_h(\F_q)
$$
and $Y\tensor_{\F_q}\bar{\F}_q\isom X'.$
As in the proof of Theorem~\ref{T:RSC},  for some prime $\ell$ we have
$ \dim H^{i}_{c,\textrm{\'et}}(X',\Q_\ell)\leq C^{2n-i}$ for all $i<2n$.

We will apply Grothendieck-Lefschetz for the Frobenius map from $Y$ on $X'$, which is $h^{-1}\Frob$ (where $\Frob$ is the Frobenius map from $X$).
As in the proof of Theorem~\ref{T:RSC}, we apply  Theorem~\ref{T:Chur} \eqref{i:comp} to conclude that the number of components of $X'$ fixed by $h^{-1}\Frob$ for even $n\geq n_H$ for some fixed $n_H$ is $1$.

 So we have, as in the proof of Theorem~\ref{T:RSC}, for even $n\geq n_A$,
\begin{align*}
|\#Y_h({\F}_q)  - q^{n}| \leq q^{n} \sum_{1\leq i } (\sqrt{q}/C)^{-i} .
\end{align*}

Thus, for fixed $q>C^2$,
\begin{align*}
&\limsup_{m\ra\infty} \frac{ \sum_{K\in INQ, \Nm \Disc(K/Q)=q^{2m}} |\Sur(\Pic^0(C_K),A)|}
{\sum_{K\in INQ, \Nm \Disc(K/Q)=q^{2m}} 1}\\
=
&\limsup_{m\ra\infty} \sum_{h\in H\setminus A} \frac{q^{2m}+O(\frac{q^{2m}}{\sqrt{q}/C-1}  )  }{(q^{2m}-q^{2m-1})|A|}\\
=
&\limsup_{m\ra\infty} \frac{1+O(\frac{1}{\sqrt{q}/C-1}  )  }{(1-q^{-1})}.
\end{align*}
The implied constant in the big $O$ notation is $1$.
A similar argument works for the $\liminf$ and the theorem follows.

\end{proof}

\section{Number field results on distribution of $\wp_1-\wp_2$}\label{S:CLd}
In this section, we see that the $(\Z/3\Z,g)$-moments predicted by Conjecture~\ref{C:atsc} hold in the number field case, and moreover are not affected by finitely many local conditions on the quadratic field, in the spirit of Conjecture~\ref{C:indoffin}.
We reduce the problem to counting cubic and quadratic extensions with certain local conditions, and then use the work of Davenport and Heilbronn \cite{DH71} to count cubic extensions.   This strategy and the computation of local masses follows along similar lines to the proof of \cite[Corollary 4]{Bhargava2016}. 
For a number field $K$, let $\O_K$ denote its ring of integers.

\begin{theorem}[Distribution of elements in $\Cl_K$, $\Z/3\Z$ moment]\label{T:DH2}
 Let $v_1,\dots,v_n$ be finite places of $\Q$, and $F_i$ be \'{e}tale quadratic $\Q_{v_i}$-algebras,
with $F_1=\Q_{v_i}^{\oplus 2}$.  
Let $S$ be the set of quadratic extensions $K$ of $\Q$ such that $K\tensor_\Q \Q_{v_i}\isom  F_i$.
Let $S^+$ and $S^-$ denote the imaginary and real extensions in $S$, respectively.
Let $v_1$ split into $w_1$ and $w_2$ in $K$ and let $g\in \Z/3\Z$.
We have
$$
\lim_{X\ra\infty} \frac{\sum_{K\in S^+, |\Disc(K)|<X} |\Sur((\Cl(\O_K),w_1-w_2),(\Z/3\Z,g)| }{\sum_{K\in S^+, |\Disc(K)|<X} 1}
=\frac{1}{3},
$$
and
$$
\lim_{X\ra\infty} \frac{\sum_{K\in S^-, |\Disc(K)|<X}  |\Sur((\Cl(\O_K),w_1-w_2),(\Z/3\Z,g)| }{\sum_{K\in S^-, |\Disc(K)|<X} 1}
=\frac{1}{9}.
$$
\end{theorem}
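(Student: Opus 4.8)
The plan is to reduce the pointed $\Z/3\Z$-moment to a count of cubic fields with prescribed splitting at $v_1$, and then invoke Davenport and Heilbronn \cite{DH71}. First I would use class field theory: a surjection $\phi\in\Sur(\Cl(\O_K),\Z/3\Z)$ corresponds to an unramified $\Z/3\Z$-extension $M/K$ that is Galois over $\Q$ with $\Gal(M/\Q)\isom\Z/3\Z\rtimes_{-1}S_2\isom S_3$, equivalently to a non-cyclic cubic field $F$ (the fixed field of a transposition) whose quadratic resolvent is $K$ and with $\Disc(F)=\Disc(K)$. The index-$3$ subgroups $\ker\phi$ are in bijection with such $F$, and each $F$ yields the pair $\{\phi,-\phi\}$. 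Under the Artin map the ideal classes $[w_i]$ satisfy $\phi([w_i])=\Frob_{w_i}\in\Gal(M/K)\isom\Z/3\Z$; since $w_2=\sigma(w_1)$ for $\sigma$ the nontrivial element of $\Gal(K/\Q)$ and conjugation by $\sigma$ inverts $\Gal(M/K)$, we get $\phi(w_1-w_2)=2\phi([w_1])$. As $2$ is invertible modulo $3$, the condition $\phi(w_1-w_2)=g$ becomes $\phi([w_1])=0$ for $g=0$ and $\phi([w_1])\neq 0$ for $g\neq 0$; equivalently (these being the only options, as $v_1$ splits in $K$) $v_1$ is totally split in $F$ when $g=0$ and inert in $F$ when $g\neq 0$.

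Next I would do the $\pm$ bookkeeping. For $g\neq 0$, among the two surjections $\{\phi,-\phi\}$ attached to an inert $F$ exactly one satisfies $\phi(w_1-w_2)=g$, since $\phi$ and $-\phi$ realize the two distinct nonzero values; hence $|\Sur((\Cl(\O_K),w_1-w_2),(\Z/3\Z,g))|$ equals the number of cubic $F$ in which $v_1$ is inert. For $g=0$, both surjections of a totally split $F$ satisfy the condition, so the count is twice the number of $F$ in which $v_1$ is totally split. Summing over $K\in S^\pm$, each numerator is a weighted count of cubic fields $F$ with $\Disc(F)=\Disc(K)$ fundamental, resolvent satisfying $K\tensor_\Q\Q_{v_i}\isom F_i$, prescribed signature (complex for $S^+$, real for $S^-$), and prescribed splitting type of $v_1$.

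I would then apply the Davenport--Heilbronn theorem in the form counting cubic fields of bounded discriminant under finitely many local conditions together with the unramifiedness condition $\Disc(F)=\Disc(K)$, exactly as in the proof of \cite[Corollary 4]{Bhargava2016}. The key local input is that, conditioned on the resolvent being split at $v_1$ so that $\Frob_{v_1}$ lands in the index-$2$ subgroup $\isom\Z/3\Z$, the three elements of that subgroup are equidistributed; thus $v_1$ is inert in $F$ (the two nonzero classes) twice as often as it is totally split (the identity). Writing the $g\neq 0$ average as $\tfrac{\#\{F:\,v_1\text{ inert}\}}{\#\{F\}}\cdot\tfrac{\#\{F\}}{\#\{K\}}$ and using the Davenport--Heilbronn averages $\E[|\Cl(\O_K)[3]|]=2$ for $S^+$ and $4/3$ for $S^-$---so that $\#\{F\}/\#\{K\}\ra\tfrac12$ respectively $\tfrac16$---gives $\tfrac23\cdot\tfrac12=\tfrac13$ and $\tfrac23\cdot\tfrac16=\tfrac19$. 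The $g=0$ case gives the identical values, the factor $2$ cancelling against the totally split density $\tfrac13$. As a check, summing over $g$ recovers the non-pointed averages $1$ and $\tfrac13$.

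The main obstacle is the local mass computation. One must verify that imposing the resolvent conditions $F_i$ at $v_2,\dots,v_n$, the condition $\Disc(F)=\Disc(K)$ at every prime, and the splitting condition at $v_1$ produces local densities in the cubic count which, divided by the corresponding densities in the quadratic count, cancel at every place except $v_1$ and $\infty$, leaving precisely the clean factors above. Concretely this requires checking, following Bhargava and Varma, that the average of $|\Cl(\O_K)[3]|$ and the $v_1$-Frobenius equidistribution are genuinely unaffected by the finitely many local conditions, and carefully handling the archimedean place, whose contribution produces the discrepancy between $\tfrac13$ (imaginary) and $\tfrac19$ (real).
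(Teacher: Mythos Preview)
Your proposal is correct and follows essentially the same route as the paper: reduce via class field theory (the paper's Proposition~\ref{P:countsur1}) to a count of nowhere-overramified non-cyclic cubic fields with prescribed local behavior at $v_1,\dots,v_n$ and $\infty$, then apply Davenport--Heilbronn with local conditions. The paper carries out the local mass computation and the tail estimate for the infinitely many ``nowhere overramified'' conditions explicitly, arriving at $c(R_0)/(3c(F_0))$; your factorization through \cite[Corollary~4]{Bhargava2016} for $\#\{F\}/\#\{K\}$ together with the $2{:}1$ local density at $v_1$ is a repackaging of the same computation.
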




We can of course add these pointed moments up over the three choices of $g\in\Z/3\Z$ and recover the theorem of Davenport and Heilbronn \cite[Theorem 3]{DH71} in the case $n=0$ or the theorem of Bhargava and Varma \cite[Corollary 4]{Bhargava2016} when there are local conditions.
 (Note that $|\Sur(\Cl(\O_K),\Z/3\Z)|+1$ is the size of the $3$-torsion of 
$\Cl(\O_K)$.)

The proof of the following  proposition is similar to that of Proposition~\ref{P:countsur}.  This number field version is easier than the function field version, because for any $K/\Q$ quadratic with $\sigma$ generating $\Gal(K/\Q)$ and $L/K$ unramified abelian we have that $L/\Q$ is Galois and
$\sigma$ acts by inversion of $\Gal(L/\Q)$, and so an analog of Proposition~\ref{P:whichL} is not required.
We say a cubic extension $L$ is \emph{nowhere overramified} if no rational prime ramifies to degree $3$.  
\begin{proposition}\label{P:countsur1}
Let $c$ the conjugacy class of order $2$ elements of $S_3$.
When $K/\Q$ is quadratic, we have a $2$-$1$ map
$$
\Sur(\Cl(\O_K),\Z/3\Z) \ra
\{
\textrm{isom. classes of nowhere overram. non-cyclic cubic $L/Q$}
|  \tilde{L}^{A_3} \isom K \},
$$
given by letting $\tilde{L}$ be the unramified extension of $K$ associated to the surjection, and $L$ a cubic subfield of $\tilde{L}$ so that $\tilde{L}$ is the Galois closure of $L$ over $\Q$.  Moreover, if $v_1$ is split into $w_1$ and $w_2$ in $K$, then under the above bijection, the image of $w_1-w_2$ is trivial in $\Z/3\Z$ if and only if $L$ is split completely over $v_1$. 
\end{proposition}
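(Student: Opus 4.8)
The plan is to construct the map by class field theory, establish that it is $2$-to-$1$ exactly as in Proposition~\ref{P:countsur}, and then prove the ``moreover'' statement by a Frobenius computation via the Artin reciprocity map. Given $\phi\in\Sur(\Cl(\O_K),\Z/3\Z)$, class field theory produces an everywhere unramified cyclic cubic extension $\tilde{L}/K$ with $\Gal(\tilde{L}/K)\isom\Z/3\Z$ identified with $\Z/3\Z$ by $\phi$ and the Artin map. As recorded in the discussion preceding the proposition, since $K/\Q$ is quadratic and $\tilde{L}/K$ is unramified abelian, $\tilde{L}/\Q$ is Galois and the generator $\sigma$ of $\Gal(K/\Q)$ acts on $\Gal(\tilde{L}/K)$ by inversion; hence $\Gal(\tilde{L}/\Q)\isom S_3$ with $K=\tilde{L}^{A_3}$. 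I would take $L$ to be the fixed field of any of the three transpositions: these cubic fields are conjugate, hence isomorphic, so $L$ is well defined up to isomorphism and is non-cyclic. For ramification, $\tilde{L}/K$ unramified forces every inertia subgroup $I_p\subseteq S_3$ to meet $A_3$ trivially, so $|I_p|\le 2$ and no prime ramifies to degree $3$ in $L$; thus $L$ is nowhere overramified. Conversely, from such an $L$ with $\tilde{L}^{A_3}\isom K$, the Galois closure $\tilde{L}$ yields an unramified $\Z/3\Z$-extension of $K$ (nowhere overramified forces $I_p\cap A_3=1$), hence a surjection $\Cl(\O_K)\to\Z/3\Z$ well defined up to the automorphism $-1$ of $\Z/3\Z$. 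The two such surjections give the same $\tilde{L}/K$ and the same $L$, so the map is $2$-to-$1$.

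For the ``moreover'' part, which is the crux, I would use that $\tilde{L}/K$ is unramified, so the Artin map sends the class of a prime $w$ of $K$ to $\Frob_w\in\Gal(\tilde{L}/K)\isom\Z/3\Z$, and this is precisely $\phi$. Thus I must evaluate $\Frob_{w_1}$ and $\Frob_{w_2}$. Since $v_1=p$ splits in $K$, it is unramified in $\tilde{L}$ and $\Frob_p$ lies in $A_3=\Gal(\tilde{L}/K)$; because $w_1,w_2$ have residue degree $1$ over $p$, the Frobenius over $K$ at each $w_i$ agrees with the Frobenius over $\Q$ at a chosen prime of $\tilde{L}$ above it. As $\sigma$ swaps $w_1$ and $w_2$ and acts on $\Gal(\tilde{L}/K)$ by inversion, I get $\Frob_{w_2}=\Frob_{w_1}^{-1}$. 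Writing $\Frob_{w_1}=r\in\Z/3\Z$ additively, $\phi(w_1-w_2)=r-(-r)=2r$, which vanishes iff $r=0$ (since $2$ is invertible modulo $3$), i.e.\ iff $\Frob_p=1$ in $A_3$. Because $\Frob_p$ already lies in $A_3$, this is the same as $\Frob_p=1$ in $S_3$, i.e.\ $p$ splits completely in $\tilde{L}$, equivalently $v_1$ splits completely in $L$. This gives the asserted equivalence, and it is consistent across the two surjections in each fiber, since replacing $\phi$ by $-\phi$ negates $\phi(w_1-w_2)$ without affecting its triviality.

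I expect the only delicate point to be this Frobenius bookkeeping: one must correctly track that the two primes $w_1,w_2$ of $K$ above $p$ carry Frobenius elements interchanged by the nontrivial element of $\Gal(K/\Q)$, hence inverse to one another in $A_3$, and confirm that the resulting factor of $2$ is harmless modulo $3$. Everything else---the class field theory dictionary, the identification $\Gal(\tilde{L}/\Q)\isom S_3$, and the ramification and $2$-to-$1$ claims---follows the template of Proposition~\ref{P:countsur} together with the remark preceding this proposition, and is routine in the number-field setting.
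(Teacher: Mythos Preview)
Your proposal is correct and follows precisely the approach the paper indicates: the paper gives no detailed proof of this proposition, only the remark that it is similar to Proposition~\ref{P:countsur} but easier (since for $K/\Q$ quadratic and $\tilde{L}/K$ unramified abelian one automatically has $\tilde{L}/\Q$ Galois with $\sigma$ acting by inversion, so no analog of Proposition~\ref{P:whichL} is needed). Your Frobenius computation $\phi(w_1-w_2)=2\Frob_{w_1}$ is exactly the number-field analog of the key identity $\delta_K=2\Frob_{\infty_1}$ in the proof of Proposition~\ref{P:countsur}, and the rest of your argument correctly fills in the class-field-theory and ramification details left implicit there.
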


Asking that $K\in S$ in Theorem~\ref{T:DH2} translates directly into conditions on the allowable 
restrictions  on the isomorphism type of $L \tesnor_\Q \Q_{v_i}$.  Thus in order to count the objects on the right hand side in Proposition~\ref{P:countsur1}, we will count isomorphism classes of cubic number fields $L/\Q$ with restrictions on the isomorphism type of $L\tesnor_\Q \Q_{v_i}$.  To ask that $L$ is nowhere overramified and non-cyclic we need 1)that $L/\Q$ is not Galois and 2)in the associated map $\phi_3: G_\Q \ra S_3$ to $L$ no inertia group has image including a $3$-cycle.  The second requirement is a condition on the isomorphism type of $L\tesnor_\Q \Q_{v}$
at every finite place $v$.

The following theorem on counting cubic extensions with local restrictions will be essential.  
For a prime $p$ of $\Q$
let $|\cdot|_p$ be the $p$-adic absolute value so that $|p|_p=p^{-1}$, and let $|\cdot|_\infty \equiv 1$ be the trivial absolute value.

\begin{theorem}[Theorem 4.1 of \cite{DW88}, see also Theorem 1 and Section 5 of \cite{DH71}]\label{T:countcubic}
 Let $v_0=\infty, v_1, \dots v_n$ be distinct places of $\Q$ and $R_i$ (for $i=0,\dots,n$) be a set
of isomorphism classes of degree $3$ \'{e}tale  $\Q_{v_i}$-algebras.  
For a place $v$ of $\Q$ and an \'{e}tale  $\Q_v$-algebra $M_v$, let $c(M_v)=|\Aut(M_v/Q_v)|^{-1} |\Disc(M_v/Q_v) |_v$, and $c(R_{i})=\sum_{M\in R_{i} } c(M)$.  Let $c_3(v)= \sum_{M/\Q_v \textrm{ deg. $3$ \'{e}tale} } c(M)$.
Then
$$
\lim_{X\ra\infty} \frac{\#\{L/\Q \textrm{ cubic, up to isom.}\ |\ |\Disc L| <X;  L\tensor_\Q \Q_{v_i}\in R_i \ i=0,\dots,n\}}{X}=\frac{1}{3\zeta(3)}\prod_{i=0}^n \frac{c(R_i)}{c_3(v_i)}.
$$
\end{theorem}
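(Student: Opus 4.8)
The plan is to prove this as a density statement for cubic fields with prescribed local behavior, combining the Delone--Faddeev parametrization with a geometry-of-numbers count in the spirit of Davenport--Heilbronn, refined with local conditions as in Datskovsky--Wright. First I would set up the parametrization: cubic rings over $\Z$ are in discriminant-preserving bijection with $\GL_2(\Z)$-equivalence classes of integral binary cubic forms, and cubic fields $L$ (through $\O_L$) correspond exactly to those classes whose form is irreducible and \emph{maximal}, i.e.\ $\O_L$ is not properly contained in a larger cubic ring. Under this dictionary the completion $\O_L\tensor_\Z\Z_p$ is recorded by the $\GL_2(\Z_p)$-orbit of the associated form, and the real type of $L$ by the $\GL_2(\R)$-orbit at $\infty$; hence each condition $L\tensor_\Q\Q_{v_i}\isom M\in R_i$ becomes a finite union of orbit conditions, which is a congruence condition modulo a power of $p_i$ at finite places and a sign/real-type condition at $\infty$. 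The problem thus reduces to counting maximal irreducible integral binary cubic forms of bounded discriminant lying in a prescribed congruence/real class, up to $\GL_2(\Z)$.

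Second I would carry out the lattice-point count. Fixing a fundamental domain $\mathcal{F}$ for $\GL_2(\R)$ acting on the four-dimensional real space of binary cubic forms, I count integral points of discriminant bounded by $X$ in $\mathcal{F}$ inside the chosen congruence class; the main term is $\vol(\{f:|\disc f|<X\}\cap\mathcal{F})$ times the density of that class, which grows linearly in $X$. \textbf{This is where I expect the main obstacle.} The domain $\mathcal{F}$ is noncompact, so one must show that reducible forms and the contribution of the cusps are of strictly lower order than $X$, and moreover that these estimates are \emph{uniform} over the congruence conditions being imposed; controlling the cusp contributions is Davenport's hardest step, and the uniformity in the modulus (in the style later made quantitative by Belabas) is what makes the local conditions permissible in the limit.

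Third I would sieve from all cubic rings down to maximal orders. Maximality is a local condition at each prime, so an inclusion--exclusion over the primes at which the ring fails to be maximal converts the ring count into the cubic-field count. The resulting Euler product of local maximality densities, together with the normalizing volume from the geometry-of-numbers step, assembles (when no $R_i$-condition is imposed) into the Davenport--Heilbronn constant $\tfrac{1}{3\zeta(3)}$ for the total number of cubic fields; the error terms from the previous paragraph must be uniform enough to survive this sieve, which is exactly why the quantitative tail bounds are needed.

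Finally I would identify each local factor with $c(R_i)/c_3(v_i)$. The point is that over $\Z_p$ the $\GL_2(\Z_p)$-orbits of forms giving maximal cubic rings biject with étale cubic $\Q_p$-algebras $M$, and a local orbit-mass computation shows the density of forms producing a given $M$ is proportional to $|\Aut(M/\Q_p)|^{-1}\,|\Disc(M/\Q_p)|_p = c(M)$; summing over $M\in R_i$ gives $c(R_i)$ and summing over all $M$ gives the total mass $c_3(v_i)$, so the conditional probability of landing in $R_i$ is precisely $c(R_i)/c_3(v_i)$ (at $\infty$ this records the real-versus-complex split, for instance $c_3(\infty)=\tfrac16+\tfrac12=\tfrac23$). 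Multiplying the independent local probabilities against the unconditioned constant $\tfrac{1}{3\zeta(3)}$ yields the stated limit. Alternatively, the entire computation can be packaged in the adelic Shintani zeta function of binary cubic forms as in Datskovsky--Wright: the local conditions factor the zeta integral into local zeta functions whose ratios are the $c(R_i)/c_3(v_i)$, and a Tauberian theorem applied at the rightmost pole extracts the same linear-in-$X$ main term, sidestepping the explicit cusp analysis at the cost of the analytic continuation of the zeta function.
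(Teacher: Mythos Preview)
The paper does not supply its own proof of this statement: it is quoted verbatim as Theorem~4.1 of Datskovsky--Wright \cite{DW88} (with a pointer also to Davenport--Heilbronn \cite{DH71}) and is used as a black box in the proof of Theorem~\ref{T:DH2}. So there is nothing in the paper to compare your argument against.

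That said, your sketch is a faithful outline of exactly the two standard proofs that the citations point to. Your first route---Delone--Faddeev parametrization, Davenport's geometry-of-numbers count on $\GL_2(\Z)\backslash V(\Z)$ with cusp control, sieve to maximal orders, and identification of the local densities with $c(R_i)/c_3(v_i)$---is the Davenport--Heilbronn argument as later refined to allow finitely many local conditions (and made quantitative by Bhargava--Shankar--Tsimerman and Taniguchi--Thorne). Your alternative via the adelic Shintani zeta function and a Tauberian argument is precisely the Datskovsky--Wright proof of their Theorem~4.1. You have correctly flagged the genuine technical point: the uniformity of the error terms in the congruence modulus, needed so that the sieve to maximality and the imposition of the $R_i$-conditions both survive the limit. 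Either route, carried out in full, yields the theorem; for the purposes of this paper no proof is needed since the result is cited.
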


We will need a similar theorem for quadratic extensions.
\begin{theorem}[follows from Theorem 1.1 of \cite{Woo10}, see also Lemma 6.1 of \cite{Taniguchi2013}] \label{T:countquadratic}
 Let $v_0=\infty, v_1, \dots v_n$ be distinct places of $\Q$ and $R_i$ (for $i=0,\dots,n$) be a set
of isomorphism classes of degree $2$ \'{e}tale $\Q_{v_i}$-algebras.  
For a place $v$ of $\Q$ and an \'{e}tale  $\Q_v$-algebra $M_v$, let $c(M_v)=|\Aut(M_v/Q_v)|^{-1} |\Disc(M_v/Q_v) |_v$, and $c(R_{i})=\sum_{M\in R_{i} } c(M)$.  Let $c_2(v)= \sum_{M/\Q_v \textrm{ deg. $2$ \'{e}tale} } c(M)$.
Then
$$
\lim_{X\ra\infty} \frac{\#\{K/\Q \textrm{ quad., up to isom.}\ |\ |\Disc K| <X; K\tensor_\Q \Q_{v_i}\in R_i \ i=0,\dots,n\}}{X}=\frac{1}{\zeta(2)}\prod_{i=0}^n \frac{c(R_i)}{c_2(v_i)}.
$$
\end{theorem}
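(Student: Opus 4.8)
The plan is to reduce the theorem to the classical count of fundamental discriminants in arithmetic progressions and then to identify the resulting local densities with the mass-theoretic factors $c(R_i)/c_2(v_i)$; equivalently, one specializes the general local--global count of abelian extensions of \cite[Theorem 1.1]{Woo10} to the group $\Z/2\Z$ over $\Q$. I will describe the direct route, which is essentially self-contained. First I would record the standard bijection between isomorphism classes of quadratic fields $K/\Q$ and nontrivial fundamental discriminants $D$, under which $|\Disc(K)| = |D|$ and $K \isom \Q(\sqrt{D})$; here $D$ is a fundamental discriminant exactly when $D \equiv 1 \pmod 4$ is squarefree or $D = 4m$ with $m$ squarefree and $m \equiv 2,3 \pmod 4$. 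Under this dictionary the only excluded quadratic \'etale $\Q$-algebra is $\Q \times \Q$, so the theorem becomes a statement about a density of fundamental discriminants. Next I would translate the local conditions: at $v_0 = \infty$ the condition $K \otimes_\Q \R \in R_0$ simply prescribes $\sign(D)$, since $K \otimes_\Q \R \isom \R \times \R$ or $\C$ according as $D > 0$ or $D < 0$; at a finite prime $p = v_i$ the isomorphism type of $K \otimes_\Q \Q_p$ (split, inert, or one of the ramified types) is determined by $D$ modulo a fixed power $p^{e_i}$ depending only on $p$ (for odd $p$ one needs only $v_p(D) \in \{0,1\}$ together with the quadratic residue symbol, so $e_i = 1$; for $p = 2$ the type is read off from $D \bmod 32$). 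Thus each condition $K \otimes_\Q \Q_{v_i} \in R_i$ amounts to requiring $D$ to lie in a prescribed union of residue classes modulo $M_i := p_i^{e_i}$, the $M_i$ being pairwise coprime.

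With the conditions so phrased, I would carry out the count. The number of fundamental discriminants with $|D| < X$, a fixed sign, and $D$ in a prescribed set of residue classes modulo $M = \prod_i M_i$ is handled by the standard squarefree sieve in arithmetic progressions (M\"obius inversion over square divisors), which produces a main term proportional to $X$ with a power-saving error term; in particular the limit in the theorem exists. Because the conditions at distinct places are imposed at coprime moduli (and at $\infty$ through the sign), the leading constant factors as $\frac{1}{\zeta(2)} \prod_{i=0}^n \beta(v_i)$, where $\frac{1}{\zeta(2)}$ is the density of fundamental discriminants in $\Z$ (counting $|D| < X$ over both signs) and $\beta(v_i)$ is the conditional local density of $R_i$ at $v_i$.

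The crux, and the main obstacle, is to verify $\beta(v_i) = c(R_i)/c_2(v_i)$ at every place, i.e. that these naive local densities coincide with the intrinsic mass normalization. This is a place-by-place computation. At $\infty$ both quadratic \'etale $\R$-algebras have $c = \tfrac12$, so $c_2(\infty) = 1$ and the sign fractions $\tfrac12$ match. At an odd prime $p$ one computes, conditional on $D$ being a fundamental discriminant, $\beta(\textrm{split}) = \beta(\textrm{inert}) = \frac{p}{2(p+1)}$ and $\beta(\textrm{ramified}) = \frac{1}{p+1}$ (summed over the two ramified fields); on the other side $c(\Q_p \times \Q_p) = c(\Q_{p^2}) = \tfrac12$ while each ramified field contributes $\frac{1}{2p}$, so $c_2(p) = \frac{p+1}{p}$ and the ratios agree exactly. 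The only genuinely tedious point is $p = 2$, where one must match the several ramified types against their discriminant valuations and automorphism counts; this is precisely the local computation performed in \cite[Lemma 6.1]{Taniguchi2013}, and it is cleanest simply to import that normalization (or equivalently to invoke \cite[Theorem 1.1]{Woo10}) rather than redo it by hand. Once the local factors are identified with $c(R_i)/c_2(v_i)$, the product formula of the theorem follows.
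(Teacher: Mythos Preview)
Your proposal is sound, and indeed there is nothing in the paper to compare it against: the paper does not supply its own proof of this theorem, but simply records it as a consequence of \cite[Theorem~1.1]{Woo10} (with a pointer to \cite[Lemma~6.1]{Taniguchi2013} for the local computations). So your write-up is strictly more than what the paper provides.

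That said, a brief comparison of the two routes is worth making. Invoking \cite[Theorem~1.1]{Woo10} amounts to specializing a general local--global density theorem for abelian extensions (with arbitrary Galois group $G$ over an arbitrary base number field) to $G=\Z/2\Z$ over $\Q$; the mass factors $c(M_v)$ and the normalization $c_2(v)$ are exactly the local weights that arise in that framework, so the identity $\beta(v_i)=c(R_i)/c_2(v_i)$ is built into the statement rather than checked by hand. Your approach instead rederives the quadratic case from first principles via fundamental discriminants and the squarefree sieve, and then verifies the match with the mass weights place by place. This is more elementary and self-contained, at the cost of the somewhat tedious $p=2$ case (which, as you note, is precisely what \cite[Lemma~6.1]{Taniguchi2013} tabulates). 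Either route is perfectly adequate here; the paper's choice simply reflects that the result is standard and not the focus of the article.

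One minor caution: be careful with the overall normalization $1/\zeta(2)$. The count of fundamental discriminants $D\neq 1$ with $|D|<X$ is asymptotic to $(6/\pi^2)X = X/\zeta(2)$, but in stating this you should make explicit that you are counting over both signs of $D$ and over the range $|D|<X$, so that the archimedean factor (which contributes $1/2$ for each sign choice) is consistently absorbed into the product $\prod_i c(R_i)/c_2(v_i)$ rather than into the leading constant. Your text handles this correctly, but it is the one place where a careless reader could lose a factor of $2$.
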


\begin{proof}[Proof of Theorem~\ref{T:DH2}]

For $i=2,\dots,n$, let $R_i$ be the set of cubic \'{e}tale extensions of $\Q_{v_i}$
such that the associated $\phi_3: G_{\Q_{v_i}} \ra S_3$ has sign map $\phi_2 : G_{\Q_{v_i}} \ra S_2$
corresponding to $F_i$, 
\emph{and} such that the image of inertia under $\phi_3$ does not include a $3$-cycle.  
For $i=1$, we let $R_1=\{\Q_{v_1}^{\oplus 3} \} $, and for $i=0$ we let $R_0$ be $\{\C\oplus \R \}$ or $\{ \R^{\oplus 3}\}$ depending on whether we are in the $S^+$ or $S^-$ case, respectively.
Label the places $v\not\in\{v_0,\dots,v_n\}$ by $v_{n+1}, v_{n+2},\dots$, and for $i\geq n+1$, let $R_i$ be the set of cubic \'{e}tale extensions of $\Q_{v_i}$
such that the associated $\phi_3: G_{\Q_{v_i}} \ra S_3$ does not have a $3$-cycle in its image of inertia.
Let $F_0$ be $\{\C \}$ or $\{ \R^{\oplus 2}\}$ depending on whether we are in the $S^+$ or $S^-$ case, respectively.

 We have, from Proposition~\ref{P:countsur1},
\begin{align*}
 &\lim_{X\ra\infty} \frac{\sum_{K\in S^+, |\Disc(K)|<X} |\Sur((\Cl(\O_K),w_1-w_2),(\Z/3\Z,0)| }{\sum_{K\in S^+, |\Disc(K)|<X} 1}\\
=&\lim_{X\ra\infty} \frac{ 2\#\{\textrm{isom. classes of  non-cyclic cubic  $L/\Q$}||\Disc(L_1)|<X,   L\tensor_\Q \Q_{v_i} \in R_i, \ i\geq 0 \} }{\#\{\textrm{isom. classes of quad.  $K/\Q$}||\Disc(K)|<X,   K\tensor_\Q \Q_{v_i} \isom F_i, \ i= 0,\dots,n \}}.
\end{align*}

Let
$$
N_Y(X):=\#\{\textrm{isom. classes of cubic  $L/\Q$}||\Disc(L)|<X,   L\tensor_\Q \Q_{v_i} \in R_i, \ i= 0,\dots,Y \}.
$$
By Theorem~\ref{T:countcubic}, for finite $Y\geq n$, we have
$$
\lim_{X\ra\infty} \frac{N_Y(X)}{X} = \frac{1}{3\zeta(3)}\prod_{i=0}^Y \frac{c(R_i)}{c_3(v_i)}.
$$
We have $N_\infty(X) \leq N_Y(X)$, so
$$
\limsup_{X\ra\infty} \frac{N_\infty(X)}{X} \leq \frac{1}{3\zeta(3)}\prod_{i=0}^\infty \frac{c(R_i)}{c_3(v_i)}.
$$
Also, we have
$$
N_\infty(X) \geq N_Y(X) -\sum_{i>Y} \#\{ \textrm{isom. classes of cubic  $L/\Q$}||\Disc(L)|<X, 
v_i \textrm{ ram. deg. $3$ in $L$}.
\}
$$
By \cite[Lemma 5.1]{DW88}, there is a constant $C$ such that for all $i\geq 1$, and associated prime $v_i$, we have
$$
 \frac{\#\{ \textrm{isom. classes of cubic  $L/\Q$}||\Disc(L)|<X, 
v_i \textrm{ ram. deg. $3$ in $L$}
\}}{X} \leq \frac{C}{v_i^2}.
$$
So, 
$$
\frac{N_\infty(X)}{X} \geq \frac{N_Y(X)}{X} -\sum_{i>Y} \frac{C}{v_i^2},
$$
and thus
$$
\liminf_{X\ra \infty} \frac{N_\infty(X)}{X} \geq \frac{1}{3\zeta(3)}\prod_{i=0}^\infty \frac{c(R_i)}{c_3(v_i)}
$$
and we conclude
$$
\lim_{X\ra \infty} \frac{N_\infty(X)}{X} =\frac{1}{3\zeta(3)}\prod_{i=0}^\infty \frac{c(R_i)}{c_3(v_i)}.
$$
Since 
$$
\lim_{X\ra\infty} \frac{\#\{\textrm{isom. classes of cyclic cubic  $L/\Q$}||\Disc(L)|<X\}}{X}=0
$$
by \cite[Equation (1)]{Coh54}, if we define
$$
N'_Y(X):=\#\{\textrm{isom. classes of non-cyclic cubic  $L/\Q$}||\Disc(L)|<X,   L\tensor_\Q \Q_{v_i} \in R_i, \ i= 0,\dots,Y \},
$$
we have
$$
\lim_{X\ra \infty} \frac{N'_\infty(X)}{X} =\frac{1}{3\zeta(3)}\prod_{i=0}^\infty \frac{c(R_i)}{c_3(v_i)}.
$$

By Theorem~\ref{T:countquadratic}, we have that
$$
\lim_{X\ra\infty} \frac{\#\{\textrm{isom. classes of quad.  $K/\Q$}||\Disc(K)|<X,   K\tensor_\Q \Q_{v_i} \isom F_i, \ i= 0,\dots,n \}}{X}=\frac{1}{\zeta(2)}\prod_{i=0}^n \frac{c(F_i)}{c_2(v_i)}.
$$
Thus,
$$
\lim_{X\ra\infty} \frac{\sum_{K\in S^+, |\Disc(K)|<X} |\Sur((\Cl(\O_K),w_1-w_2),(\Z/3\Z,0)| }{\sum_{K\in S^+, |\Disc(K)|<X} 1}=
\frac{2\zeta(2)}{3\zeta(3)}\prod_{i=0}^\infty \frac{c(R_i)}{c_3(v_i)} \prod_{i=0}^n \frac{c_2(v_i)}{c(F_i)}.
$$
It remains to compute the local factors.

We have that for a finite place $v$ that $c_3(v)=1+v^{-1}+v^{-2}$ 
and $c_2(v)=1+v^{-1}$ and for $i>n$ that $c(R_i)=1+v^{-1}$.  (For tame $v$ this is a simple computation with the
absolute tame Galois group, and for wild $v$ these are the $n=2,3$ cases of   Bhargava's mass formula for local fields \cite[Theorem 1.1]{Bha07}, and follow from Serre's mass formula \cite[Th\'{e}or\`{e}me 2]{Ser78},
as well.)
Also, $c_3(\infty)=2/3$ 
and $c_2(\infty)=1$.

When $2\leq i \leq n,$ we will see that $c(R_i)=c(F_i)$.
Given $\phi: G_{\Q_v}\ra S_3$ such that the inertia group does not have image containing a $3$-cycle, since the inertia group is a normal subgroup, either $|\im(\phi)|=2$ or $\phi$ is unramified.  
If $F_i$ is unramified, then every element of $R_i$ must be unramified.
If $F_i=\Q_{v_i}^{\oplus 2}$, then $R_i=\{\Q_{v_i}^{\oplus 3}, K_{v_i} \},$ where $K_{v_i}/\Q_{v_i}$ is the unramified extension of degree $3$, and $c(F_i)=1/2$ and $c(R_i)=1/6+1/3=1/2$.  
If $F_i/\Q_{v_i}$ is an unramified field extension of degree $2$, then $R_i=\{F_i\},$ and $c(F_i)=c(R_i).$  
If $F_i$ is ramified, then since $|\im(\phi)|=2$, we have $R_i=\{F_i\oplus Q_{v_i}\}$ and $c(F_i)=c(R_i).$
Now in the case $i=1$, we have $c(R_1)=1/6$ and $c(F_1)=1/2$.
So,
$$
\lim_{X\ra\infty} \frac{\sum_{K\in S^+, |\Disc(K)|<X} |\Sur((\Cl(\O_K),w_1-w_2),(\Z/3\Z,0)| }{\sum_{K\in S^+, |\Disc(K)|<X} 1}=
\frac{\zeta(2)}{3\zeta(3)} \frac{c(R_0)}{c(F_0)}  \prod_{p}^\infty \frac{1+p^{-1}}{1+p^{-1}+p^{-2}}=\frac{c(R_0)}{3c(F_0)} .
$$
Since $c(\{ \C \})=1/2$ and $c(\{\R\oplus \C \})=1/2,$ we conclude the first case of the theorem when $g=1$.
Since $c(\{ \R^{\oplus 2} \})=1/2$ and $c(\R^{\oplus 3})=1/6,$ we conclude the second case of the theorem when $g=1$.

For non-trivial $g$, we can either  subtract the pointed moments we have just proven from the known (non-pointed) moments, or make the argument as above but with $R_1$ replaced the set of the unramified cubic field extension of $\Q_{v_1}$.
Either approach tells us the average number of surjections in which $w_1-w_2$ has non-trivial image.  We note that by the automorphism of $\Z/3\Z$ there must be an equal number of surjections with each non-trivial image, and we conclude the theorem.
\end{proof}

In fact, if finitely many
places $v_i$ are required to split completely into $v_i'$ and $v_i''$ in the quadratic extension, 
 using the argument in the proof of Theorem~\ref{T:DH2}, one can count surjections where 
$\pm (v_i'$ - $v_i'')$ have any given possible list of images in $\Z/3\Z$.  However, note that
$(\Cl(\O_K), v_1'$ - $v_1'', v_2'$ - $v_2'')$ is not a well-defined ``double-pointed group,''
and so one necessarily must keep track of the images of plus or minus certain elements if tracking more than one.
Klagsbrun \cite{Klagsbrun2017a} avoids this technical issue by studying the quotients of $\Cl(\O_K)$ by the elements of $v_i' - v_i''$ instead of moments of pointed groups.

\section{Predictions of conjectures}\label{S:conjs}
In this section, we will discuss some of the predictions of Conjectures~\ref{C:indoffin} and \ref{C:atsc}.  
In the function field analog, the question of the size of $\Pic^0$ can be rephrased more geometrically.  We consider hyperelliptic curves over $\F_q$, and ask how many $\F_q$-points there are on their Jacobians (as $\Pic^0(C_K)=\Jac(C_K)(\F_q)$). 
Since the differences of $\F_q$-points of a curve give $\F_q$-points of its Jacobian, one might first guess that curves with more points would have more points in the $p$-torsion of their Jacobians.
 Conjecture~\ref{C:indoffin} has the counter-intuitive prediction that the number of $\F_q$-points on the curve, $\#C_K(\F_q)$, does not affect the number of $\F_q$-points on the $p^k$-torsion of the Jacobian, $\#\Jac(C_K)[p^k](\F_q)$ (even though all of the points in $\Jac(C_K)(\F_q)$ are torsion points).
This is because the number of $\F_q$-points on $C_K$ is determined by the completions of $K$ at the $q+1$ degree $1$ places of $\F_q(t)$.  So even if we restrict to hyperelliptic curves with the maximum number of $\F_q$-points, $2q+2$,  Conjecture~\ref{C:indoffin} predicts that $\#\Jac(C_K)[p^k](\F_q)$ is distributed just as it is for hyperelliptic curves with no points, or for all hyperelliptic curves.  In particular, the predicted average of $\#\Jac(C_K)[p^k](\F_q)$ is $k+1$

It is worth noting that restricting to hyperelliptic curves $C_K$ with the maximal number of $\F_q$-points does indeed provably make the asymptotics of the average of $\#\Jac(C_K)(\F_q)$ larger, which can be deduced from work of Taniguchi \cite[Theorem 6.7]{Tan04}.   There are at least two important caveats, which are that 1)the average size of $\#\Jac(C_K)(\F_q)$ is infinite and 2)$\Jac(C_K)(\F_q)$ includes $2$-torsion which does not follow the heuristics discussed in this paper.

Suppose we consider imaginary quadratic extensions $K$ of $\Q$ split completely at a rational prime $\ell$ into $\ell_1, \ell_2$.  Conjecture~\ref{C:atsc} then predicts that the probability that $\ell_1-\ell_2$ is trivial in $\Cl(\O_K)_p$ is $1-p^{-1}$.
We can see this since $\sum_{A\in \A} |\Aut(G)|^{-1}|G|^{-1} \prod_{k\geq 1} (1-p^{-k})=1-p^{-1}$.
  Further,  for a random group $G$ from $\mu_{CL}$ and a uniform random element $g\in G$ we have that 
$$\Prob(G\isom A|g=1)=(1-p^{-1})^{-1}\Prob(G\isom A \textrm{ and } g=1)=|\Aut(A)|^{-1}|A|^{-1}\prod_{k\geq 2}(1-p^{-k}) . $$
Thus, if we restrict to those $K$
such that $\ell_1-\ell_2$ is trivial in $\Cl(\O_K)_p$,
 Conjecture~\ref{C:atsc} predicts
  that
$\Cl(\O_K)_p$ is then distributed according to $\mu_{CL}^r$, like the Sylow $p$-subgroups of class groups of real quadratic fields.  Said another way, if among imaginary quadratic extensions $K$ of $\Q$ split completely at a rational prime $\ell$ we consider the group $\Cl(\O_K)_p/\langle \ell_1-\ell_2 \rangle$, the distribution is predicted to be $\mu_{CL}^r$, and is predicted to not change even if we restrict to only those $K$ for which
$\ell_1-\ell_2$ is trivial in $\Cl(\O_K)_p.$

We now see somewhat of a contrast to the first paragraph of this section.  We restrict to  imaginary quadratic extensions $K$ of $\Q$ split completely at a rational prime $\ell$ such that $\ell_1-\ell_2$ is non-trivial in $\Cl(\O_K)_p$ (forcing, in particular, $\Cl(\O_K)_p$ to be non-trivial).  In this case, Conjecture~\ref{C:atsc} predicts, for example, that the $p$-torsion $\Cl(\O_K)[p]$ has average size $p+p^{-1}$ (as opposed to average size $2$ among all imaginary quadratics).

\subsection*{Acknowledgements} 
The author would like to thank Jordan Ellenberg, Akshay Venkatesh,  Nigel Boston, Silas Johnson, Manjul Bhargava, Takashi Taniguchi, and J\"{u}rgen Kl\"{u}ners  for useful conversations and comments on this paper. 
The author would also like to thank the anonymous referee for detailed comments that improved the exposition of the paper.
This work was done with the support of an American Institute of Mathematics Five-Year Fellowship,
a Packard Fellowship for Science and Engineering, a Sloan Research Fellowship, and National Science Foundation grants DMS-1147782 and DMS-1301690 and CAREER grant DMS-1652116, and a Vilas Early Career Investigator Award. 

\newcommand{\etalchar}[1]{$^{#1}$}
\def\cprime{$'$}

\end{document}